\numberwithin{equation}{section}
\newtheorem{teo}{Theorem}[section]
\newtheorem{cor}[teo]{Corollary}
\newtheorem{prop}[teo]{Proposition}
\newtheorem{lemma}[teo]{Lemma}
\newtheorem{remark}[teo]{Remark}
\newtheorem{example}[teo]{Example}
\newtheorem{definition}[teo]{Definition}
\theoremstyle{definition}
\theoremstyle{remark}
 \font\filt=msbm10
\def\neweq#1{\begin{equation}\label{#1}}
\def\endeq{\end{equation}}
\def\proof{\noindent{\it Proof.} }
\def\ds{\displaystyle}
\def \R {\hbox{\filt R}}
\def\H{\mathcal{H}}
\def\Cap{{\rm Cap}}
\def\E{\mathcal{E}}
\def\K{\mathcal{K}}
\def\M{\mathcal{M}}
\def\S{{\Sigma}}
\def\C{\mathcal{C}}
\def\J{\mathcal{J}}
\def\ds{\displaystyle}
\def \res{\mathop{\hbox{\vrule height 7pt width .5pt depth 0pt
\vrule height .5pt width 6pt depth 0pt}}\nolimits}
\def\B{\mathcal{B}}
\def \res{\mathop{\hbox{\vrule height 7pt width .5pt depth 0pt
\vrule height .5pt width 6pt depth 0pt}}\nolimits}
\begin{document}

\title{Optimal convex shapes for concave functionals}

\author{Dorin BUCUR, Ilaria FRAGAL\`A, Jimmy LAMBOLEY}

\date{\today}

\baselineskip14pt \maketitle

\begin{abstract}
Motivated by a long-standing conjecture of P\'olya and Szeg\"{o} about the Newtonian capacity of convex bodies, we discuss the role of concavity inequalities in shape optimization, and we provide several counterexamples to the Blaschke-concavity of variational functionals, including capacity.
We then introduce a new algebraic structure on convex bodies, which allows to obtain global concavity and  indecomposability results, and we discuss their application to isoperimetric-like inequalities. As a byproduct of this approach we also obtain a quantitative version of the Kneser-S\"uss inequality.
Finally, for a large class of functionals involving Dirichlet energies and the surface measure,  we perform a local analysis of strictly convex portions of the boundary via second order shape derivatives. This allows in particular to  exclude the presence of smooth regions with positive Gauss curvature in an optimal shape for P\'olya-Szeg\"o problem.
\end{abstract}

\medskip
\noindent {\small {\it 2000MSC\ }: 49Q10, 31A15.}

\smallskip
\noindent {\small {\it Keywords} : convex bodies, concavity inequalities, optimization, shape derivatives, capacity.}

\section{Introduction}
The initial motivation of this paper
may be traced in a long-standing conjecture by P\'olya and Szeg\"{o}  on minimal capacity sets. Precisely, the question is to find the convex sets in $\R^3$ of prescribed surface area which minimize the electrostatic capacity. In \cite{posz51} it is conjectured that the minimizer is a planar disk, and the problem is currently open, though some advances in favor of the conjecture have been recently made in \cite{CFG, FrGaPi}.
From the point of view of shape optimization, the most interesting feature is that the expected optimal shape is degenerate (the optimality of the disk among {\it planar} domains being straightforward).
Hence one may reasonably expect that capacity enjoys a certain concavity-like property with respect to the shape variation.

\medskip
Our first purpose is actually to investigate global concavity inequalities for functionals defined on
the family of convex sets, in the perspective of applying them to solve shape optimization problems.
The underlying idea is that
minimizing a concave shape functional leads in a natural way to optimal sets
which are either degenerate or, in a suitable sense, extremal.
Of course, the notions of concavity and extremality become meaningful only after
the family of convex sets has been endowed with a certain algebraic structure.
The choice of such algebraic structure is a crucial step, which requires a careful
balance between two opposite purposes: make the shape functional concave and the class of extremal convex bodies as restricted as possible.

\medskip
The algebraic structures most commonly considered on convex bodies are the Minkowski and Blaschke additions, which agree for planar sets but are deeply different in dimension $n \geq 3$. We refer the reader
to Section \ref{2} for their definitions, and to \cite{ku02,ku09,Alex} for their systematic use in solving isoperimetric-type problems (in a similar spirit, see also the recent papers \cite{cagr07}, where global convexity properties associated to shadow systems allow to understand the extremality of parallelograms for the Mahler conjecture, and \cite{lano10}, where polyhedra appear as optimal shapes).
The best choice among the Minkowski or Blaschke sum is self-evident in many situations, for instance in presence of a constraint which behaves linearly with respect to one of them: respectively,  a prescribed mean width or surface area.

\medskip
In the Minkowski structure,  the queen mother among concavity inequalities is the Brunn-Minkowski theorem for the volume,  a cornerstone in Convex Geometry dating back to one century ago, for which we refer to the extended survey paper \cite{Ga}.
 A lot of efforts have been made during the years to show that the analogous concavity inequality holds true for several variational energies of Dirichlet type. A non-exhaustive list includes (functionals of) the first Dirichlet-eigenvalue of the Laplacian \cite{BrLi}, the Newtonian capacity \cite{Bo,CaJeLi},
 the torsional rigidity \cite{Bo2}, the $p$-capacity \cite{CoSa},
 the logarithmic capacity \cite{CoCu},
the first eigenvalue of the Monge-Amp\`ere operator \cite{Sa}, the Bernoulli constant \cite{bisa09}.
For all these functionals, the Brunn-Minkowski inequality, combined with a result of Hadwiger, allows to identify balls as maximizers
under prescribed mean width. Under the same constraint,
minimizers must be searched among ``extremal'' sets in the Minkowski
structure, usually called indecomposable bodies. Except in dimension $n=2$, when such bodies are reduced to (possibly degenerate) triangles, this is a very weak information since  Minkowski-indecomposable sets are a dense family for $n \geq 3$.

\medskip
On the contrary, in the Blaschke structure,
a theorem of Bronshtein
characterizes simplexes as the unique indecomposable bodies (in any dimension).
For this reason, it is interesting to understand which kind of functionals enjoy a Blaschke-concavity property: their  only possible minimizers under prescribed surface area
will be either degenerate sets or simplexes. To the best of our knowledge, the unique known result in this direction is a theorem proved in the thirties by Kneser and S\"uss. It states a concavity inequality for the volume functional, which reads as the exact analogue of Brunn-Minkowski theorem in the Blaschke structure.

\medskip
The starting point  of this paper is actually the investigation of Kneser-S\"uss type inequalities for other functionals than volume, such as capacity and the first Dirichlet eigenvalue of the Laplacian. Quite surprisingly, we arrive fast at the conclusion that  Blaschke-concavity inequalities for those functionals do not hold (see Propositions \ref{rem1} and \ref{rem3}).
In particular we learn that,
regarding concavity, capacity behaves differently from volume,
though
in the recent paper \cite{GaHa} the authors claim that it
has the same ``status'' as volume, as it plays  the role of its own dual set function in the dual
Brunn-Minkowski theory.  The argument we use is very simple, and relies on a Hadwiger-type theorem which allows to identify balls as maximizers of
any Blaschke-concave functional under a surface area constraint (see Theorem \ref{KSteo}).
Incidentally, by the same method we provide a negative answer to the open question, stated for instance in \cite{Co},
 whether
the second Dirichlet eigenvalue of the Laplacian satisfies a Brunn-Minkowksi type inequality (see Proposition \ref{rem2}).

\medskip
Moving from these counterexamples, in order to gain concavity for functionals
other than volume,  we introduce a new algebraic structure on convex bodies.
The reason why we drop the Blaschke addition is that it amounts to sum the surface area measures of two convex bodies, and the surface area measure is precisely related to the first variation of volume. In order to deal with a given functional different from volume, the natural idea is to sum its first variation measure at two given sets. Of course, some assumptions are needed to ensure the well-posedness of the notion (in the same way as Minkowski existence theorem must be invoked in the definition of Blaschke sum, see Definitions \ref{crucial1} and \ref{crucial2} for more details).
However this can be done in good generality, and the abstract framework includes several significant examples, including capacity, torsional rigidity, and first Dirichlet eigenvalue of the Laplacian. With respect to this new sum, we prove that concavity holds true for
any functional which satisfies the Brunn-Minkowski inequality and has first variation representable by a suitable integral formula: we believe that such result (Theorem \ref{abstract1}) can be regarded as the natural analogue of Kneser-S\"uss theorem for the volume functional. We also show that our approach allows to deduce  from the results in \cite{fmp09}   a quantitative version of the Kneser-S\"uss inequality which involves the Fraenkel asymmetry (see Corollary \ref{ksqq}).

Next, still in the same algebraic structure, we are able to characterize indecomposable bodies in any dimension (see Theorem \ref{B}).
We then turn attention to the resulting isoperimetric-type inequalities in shape optimization: some of them (like (\ref{known})) are
rediscovered from the literature, some others (like (\ref{unknown})) are new, and some others (like (\ref{open})) rise delicate open questions.
Unfortunately, the above described results do not help to solve the P\'olya and Szeg\"{o} conjecture, since the surface constraint is difficult to handle in the new algebraic structure.

\medskip
Thus, with a change of perspective, we turn attention to investigate {\it local} properties of the optimal convex shapes by the analysis of the second order shape derivative. Contrary to the above discussed global concavity properties,
in the local analysis of a strictly convex region we are allowed to use  deformations which are not necessarily a resultant of the sum of convex sets. As usually, second order shape derivatives are highly technical and rather difficult to manage. Nevertheless, when computed at an optimal shape, they give an interesting information. Precisely, for a large class of functionals involving Dirichlet energies and the surface measure, and in particular for the P\'olya-Szeg\"{o} capacity problem, we prove that an optimal shape cannot have smooth open regions on its boundary with a non vanishing Gauss curvature (see Remark \ref{ex:PS}). This can be done by observing that, otherwise,  the second order shape derivative would not be a positive bilinear form with respect to a suitable class of shape deformations. We point out that the nonexistence of smooth regions with strictly positive curvatures, though of course does not prove entirely P\'olya-Szeg\"o conjecture, is the first qualitative information available on optimal sets: by now, the study of the problem had been carried out essentially by means of the first variation, which apparently does not contain enough information.

\medskip
To conclude this introduction, we wish to mention two open questions which seem of relevant interest
among those left by this paper: finding some methodology to exclude the  optimality of polyhedra for the P\'olya-Szeg\"o problem, and
understanding whether a Blaschke-concavity inequality for capacity may hold true within some restricted class of convex bodies.

\medskip
The contents are organized as follows.

In Section \ref{2} we discuss concavity inequalities in the Minkowski and the Blaschke structure.

In Section \ref{3} we define the new algebraic structure, and we give the related concavity and indecomposability results.

In Section \ref{4} we deal with the local approach via second order shape derivatives.

\section {Global Minkowski and Blaschke-concavity inequalities}\label{2}
We start by introducing, in Sections \ref{width} and \ref{surface} below, the Brunn-Minkowski and Kneser-S\"uss inequalities.
This is done for convenience of the reader with a twofold aim:
showing how these inequalities can be used in shape optimization problems, and paving the way
to the counterexamples contained in Section \ref{counter}.

Throughout the paper, we adopt the following notation: we let  $\K ^n$ be the class of convex bodies (compact convex sets) in $\R ^n$,
$\K _0 ^n$ be the class of convex bodies with nonempty interior, $\B^n$ be the class of balls, and $\S^n$ be
the class of simplexes.

By saying that a functional $F:\K ^n \to \R ^+$ is $\alpha$-homogenous (for some $\alpha \neq 0$), we mean that  $F(tK) = t ^ \alpha F (K)$ for every  $K \in \K ^n$ and every $t \in \R^+$.

For $K \in \K ^n$, we recall that its support function $h (K)$ is defined on the unit sphere $\mathbb{S} ^ {n-1}$ of $\R^n$ by:
 $$h(K) (\nu):= \sup _{x \in K} \big ( x \cdot \nu \big )   \qquad \forall \nu \in \mathbb{S} ^ {n-1}\ .$$

Moreover, for $K \in \K _0 ^n$, we denote by $\nu _K: \partial K \to \mathbb{S} ^ {n-1}$ the Gauss map of $K$ (which is well defined $\H ^ {n-1}$-a.e.\ on $\partial K$).

\subsection{Extremal problems under mean width constraint}\label{width}

The Minkowski addition of two convex bodies $K$ and $L$ can be formally defined as the convex body $K+L$ such that
$$h(K+L)=h(K)+h(L)\ .$$

If a functional $ F: \K ^n \to \R ^+$ is Minkowski linear (meaning that $F(K+L)= F(K) + F(L)$),
continuous in the Hausdorff distance and rigid motion invariant, then it is a constant multiple of the mean width $M(K)$ (see \cite[p.167]{Sch}).
Recall that $M (K)$ is defined by
$$M (K) := \frac{2}{\H ^ {n-1} (\mathbb{S}^ {n-1})} \int _{\mathbb{S} ^ {n-1}} h (K) d \H ^ {n-1}\ , $$
and coincides with the perimeter of $K$ if we are in dimension $n=2$.

\medskip
We say that a $\alpha$-homogenous functional $F: \K ^n \to \R ^+$ satisfies the Brunn-Minkowski inequality if
\begin{equation}\label{BM}
F^{1/\alpha} (K +L) \geq F ^ {1/\alpha} (K ) + F ^ {1/\alpha} (L) \qquad \forall \, K, L \in \K ^n\ .
\end{equation}

Besides volume (see \cite[Theorem 6.1.1]{Sch}), several functionals satisfy the Brunn-Minkowski inequality, see the list of references given in the Introduction and also the survey paper  \cite{Co}.
To any of these functionals we may apply the following result. We give the proof for completeness since we were unable to find a precise reference
in the literature (except for the sketch given in \cite[Remark 6.1]{bisa09}); for similar results, see also the works \cite{Ha} and \cite{Sc}.

\medskip

\begin{teo}\label{BMteo}
Assume that $F: \K ^n \to \R ^+$ is $\alpha$-homogeneous, continuous in the Hausdorff distance,
invariant under rigid motions, and satisfies the Brunn-Minkowski inequality.
Consider the functional
$$\E(K) := \frac{F^ {1 / \alpha}(K)}{M(K)}\ .$$
Then:
\begin{itemize}
\item[--] the maximum of $\E$ over $\K ^n$ is attained on $\B^n$;
\item[--] if inequality \eqref{BM} is strict for non-homothetic sets, then 
$\E$ attains its maximum over $\K ^n$ only on $\B^n$; moreover, 
for $n=2$, $\E$ can attain its minimum over $\K ^2$ only on $\S^2$ (triangles) or on $\K ^2 \setminus \K ^2 _0$ (segments).
\end{itemize}
\end{teo}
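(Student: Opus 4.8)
The plan is to exploit the structural fact that $F^{1/\alpha}$ is $1$-homogeneous and Minkowski-superadditive (this being exactly the Brunn-Minkowski inequality \eqref{BM}), while the mean width $M$ is $1$-homogeneous, Minkowski-\emph{additive}, and invariant under rigid motions. Thus $\E$ is the quotient of a ``concave-type'' functional by a linear one, both $1$-homogeneous; this is precisely the configuration in which balls emerge as maximizers through rotational averaging, and, when $n=2$, indecomposable bodies emerge as minimizers. Note that $\E$ is constant on $\B^n$: by homogeneity and motion invariance both numerator and denominator scale the same way under dilation and are translation invariant.

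For the maximum I would average $K$ over the rotation group. Let $\overline{K}$ denote the Minkowski average of $gK$ over $g \in SO(n)$ with respect to Haar measure, defined through its support function $h(\overline K)(\nu) = \int_{SO(n)} h(K)(g^{-1}\nu)\,dg$. By invariance of Haar measure this support function is independent of $\nu$, so $\overline K \in \B^n$. Since $M$ is linear and rotation invariant, $M(\overline K) = M(K)$. To compare the numerators I would approximate $\overline K$ in the Hausdorff distance by finite Minkowski averages $\frac1m\sum_{i=1}^m g_iK$ (Riemann sums on $SO(n)$, legitimate since $(g,\nu)\mapsto h(K)(g^{-1}\nu)$ is continuous on the compact product $SO(n)\times\mathbb{S}^{n-1}$); iterating \eqref{BM} and using $1$-homogeneity and rotation invariance of $F$ gives $F^{1/\alpha}(\frac1m\sum_i g_iK) \geq F^{1/\alpha}(K)$, and Hausdorff-continuity of $F$ passes this to the limit, yielding $F^{1/\alpha}(\overline K) \geq F^{1/\alpha}(K)$. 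Hence $\E(\overline K) \geq \E(K)$ with $\overline K$ a ball, so the maximum over $\K^n$ is attained on $\B^n$. This averaging step is the Hadwiger-type ingredient and, together with the limit passage, is the main technical point to make rigorous.

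For the strict case I would analyze equality. Fix $g \in SO(n)$ and set $L = \frac12(K+gK)$; then $M(L) = M(K)$ and, by \eqref{BM} together with rotation invariance, $F^{1/\alpha}(L) \geq F^{1/\alpha}(K)$, so $\E(L) \geq \E(K)$, with equality only if $K$ and $gK$ are homothetic. If $K$ is a maximizer this forces equality for every $g$, i.e. $gK$ is homothetic to $K$ for all rotations $g$; since rotations preserve size, each $gK$ is in fact a translate of $K$, and a convex body all of whose rotations are translates of itself must be a ball (after recentering at its centroid one checks $K$ is $SO(n)$-invariant). Thus the maximum is attained only on $\B^n$.

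For the minimum when $n=2$ I would invoke Minkowski indecomposability, recalling that in the plane the indecomposable bodies are exactly triangles and (degenerate) segments. Suppose $K \in \K^2$ is a minimizer and is decomposable, $K = K_1 + K_2$ with $K_1,K_2$ non-homothetic and each of positive mean width. Then $M(K) = M(K_1) + M(K_2)$ while \eqref{BM}, now strict, gives $F^{1/\alpha}(K) > F^{1/\alpha}(K_1) + F^{1/\alpha}(K_2)$. Writing $a_i = F^{1/\alpha}(K_i)$, $b_i = M(K_i)$ and applying the mediant inequality, $\min\{\E(K_1),\E(K_2)\} \leq \frac{a_1+a_2}{b_1+b_2} < \frac{F^{1/\alpha}(K)}{M(K)} = \E(K)$, contradicting minimality. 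Hence a minimizer is indecomposable, i.e. lies in $\S^2$ or in $\K^2\setminus\K^2_0$. The only delicate points here are checking that a nontrivial decomposition can be chosen with non-homothetic summands (so that the strict form of \eqref{BM} applies) and discarding point-like summands, both of which are routine.
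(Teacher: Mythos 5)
Your proposal is correct and follows essentially the same route as the paper: rotation means converging to a ball for the maximum (you re-derive the Hadwiger-type averaging step via Haar measure and Riemann sums, where the paper simply cites Hadwiger's theorem from Schneider), and the mediant inequality combined with the planar classification of Minkowski-indecomposable bodies for the minimum. Your equality analysis for the uniqueness of the ball as maximizer is somewhat more detailed than the paper's one-line claim, but it is the same underlying argument.
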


\proof  By a theorem of Hadwiger
\cite[Theorem 3.3.2]{Sch}, for any $K \in \K ^n$ with  affine hull of strictly positive dimension, there exists a sequence of Minkowski rotation means of $K$ which converges to a ball in the Hausdorff metric. Since $F$ satisfies (\ref{BM}) and the mean width is Minkowski linear,
using also the continuity and invariance assumptions on $F$, it follows immediately that balls are maximizers for the quotient functional $\E$, 
and that no other maximizer exists if the strict inequality holds in (\ref{BM}) for non-homothetic sets. 
On the other hand, if such a strict inequality holds, 
the minimum of the shape functional $\E$ is attained necessarily on a Minkowski-indecomposable set. Indeed, if $K$ can be decomposed as
$K = K' + K''$, with $K'$ and $K''$ non-homothetic, then $K$ cannot be a minimizer for $\E$, since
$$
\begin{array}{ll}\displaystyle{\E (K) = \frac{F^ {1/\alpha} ( K' +  K'' )} {M ( K' +  K'' )}}
   & >
\displaystyle{   \frac{F^ {1/\alpha} ( K' ) + F^ {1/\alpha} (  K'' )} {M ( K')  + M (K'' )} }
   \\  & \displaystyle{\geq \min _{i=1,2}
\Big \{ \frac{F^ {1/\alpha} ( K' ) } {M ( K') }
  \ ,\
\frac{F^ {1/\alpha} ( K'' ) } {M ( K'') }
    \Big \}}\ .
\end{array}
$$
Then the shape of possible minimizers for $\E$ over $\K ^n$ can be deduced from the identification, holding in dimension
$n=2$, of Minkowski indecomposable sets with (possibly degenerate)  triangles \cite[Theorem 3.2.11]{Sch}.  \qed

\begin{remark} {\rm (i) If $n\ge 3$, the family of Minkowski indecomposable sets
is dense in $\K ^n$  \cite[Theorem 3.2.14]{Sch}, so that arguing as above no qualitative information about minimizers of $\E$ can be extracted.

\smallskip
(ii)  Clearly the shape functional $\E$ is invariant by dilations, and the problem of maximizing or minimizing $\E$ over $\K ^n$ is equivalent to the problem of maximizing or minimizing $F$ under a prescribed mean width. A similar remark can be repeated for the shape optimization problems considered in the next sections.
}
\end{remark}

Thanks to Theorem \ref{BMteo},
we immediately get for instance that the maximum of the Newtonian capacity in $\K ^3$, under the constraint of prescribed mean width, is attained at the ball.

\subsection {Extremal problems under surface constraint}\label{surface}

Since in shape optimization problems, a surface area constraint occurs more frequently than a mean width one, it may be convenient to work with the Blaschke addition in place of the Minkowski one.

\medskip
We recall that the surface area measure $\mu (K)$ of a convex body $K \in \K _0 ^n$ is
the positive measure on  $\mathbb{S} ^{n-1}$ defined by
\begin{equation}\label{surfmeasure}
\mu (K)[\omega] = \H ^ {n-1} \big (  \nu _K ^ {-1}(\omega) \big ) \qquad
\hbox{ for all Borel sets } \omega \subseteq \mathbb{S} ^ {n-1}\ .
\end{equation}
For instance, if $K$ is a polyhedron with faces $F_i$
and normals with endpoints $P_i$, then $\mu (K) = \sum _i \H ^ {n-1} (F_i) \delta _{P_i}$, whereas if $K$ is smooth and has strictly positive curvatures, then $\mu (K) = (G_K \circ \nu_K ^ {-1}) ^ {-1}\H ^ {n-1} \res \mathbb{S} ^ {n-1}$, being $G_K$ the Gaussian curvature of $K$.

\medskip
The Blaschke addition of two elements $K$ and $L$ of $\K _0 ^n$
is defined as the unique convex body  $K\stackrel{.}{+}L \in \K _0 ^n$ such that
$$\mu (K \stackrel{.}{+} L) = \mu(K)+\mu(L)\ . $$
(Accordingly, one may also define the Blaschke product $t \cdot K$ for any $t \in \R^+$ and $K \in \K _0 ^n$ through the identity
$\mu (t \cdot K) = t \mu (K)$, which  amounts to say that $t \cdot K = t ^ {1/ (n-1)} K$.)

About the correctness of this definition, we refer the reader to the comments in Section \ref{3}, Example \ref{ex}.

\medskip
If a functional $ F: \K_0 ^n \to \R ^+$ is Blaschke linear (meaning that $F(K\stackrel{.} +L)= F(K) + F(L)$),
continuous in the Hausdorff distance and rigid motion invariant, then it is a constant multiple of the surface area $S(K)$ (see \cite[Theorem 10.1]{GrZh}).

\medskip
By a slight abuse of language, and referring to the algebraic structure on the family of convex sets,
we say that a $\alpha$-homogenous functional $F: \K ^n \to \R ^+$ satisfies the Kneser-S\"uss inequality, if

\begin{equation}\label{falseconj}
F^{(n-1)/\alpha} (K \stackrel{.}{+} L) \geq F ^ {(n-1)/\alpha} (K ) + F ^ {(n-1)/\alpha} (L) \qquad \forall \, K, L \in \K _0 ^n\ .
\end{equation}

\bigskip
\begin{teo}\label{KSteo}
Assume that $F: \K ^n \to \R ^+$ is $\alpha$-homogeneous,  continuous in the Hausdorff distance, invariant under rigid motions,
and satisfies the Kneser-S\"uss inequality.
Consider the functional
$$\E(K) := \frac{F^ {(n-1) / \alpha}(K)}{S(K)}\ .$$
Then:
\begin{itemize}
\item[--] the maximum of $\E$ over $\K ^n$ is attained on $\B^n$;
\item[--] if inequality \eqref{falseconj} is strict for non-homothetic sets, then $\E$ attains its maximum over $\K ^n$ only on $\B ^n$; moreover, $\E$ can attain its minimum over $\K ^n$ only on $\S^n$ or on $\K ^n \setminus \K ^n _0$.
\end{itemize}
\end{teo}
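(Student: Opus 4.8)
The plan is to mirror the proof of Theorem \ref{BMteo}, replacing throughout the Minkowski sum by the Blaschke sum $\stackrel{.}{+}$, the mean width $M$ by the surface area $S$, and Hadwiger's theorem by its Blaschke counterpart. Set $G := F^{(n-1)/\alpha}$. A direct computation using the identity $t\cdot K = t^{1/(n-1)}K$ shows that $G$ is homogeneous of degree one with respect to Blaschke dilation, namely $G(t\cdot K)=t\,G(K)$, while the Kneser--S\"uss inequality \eqref{falseconj} says exactly that $G$ is Blaschke-superadditive, $G(K\stackrel{.}{+}L)\ge G(K)+G(L)$. On the other hand, since $S(K)$ is the total mass of $\mu(K)$, the functional $S$ is Blaschke-linear and Blaschke-homogeneous of degree one. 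Both $G$ and $S$ are continuous and rigid-motion invariant, and $\E=G/S$.

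For the maximum I would introduce the \emph{Blaschke rotation means} of $K\in\K_0^n$: given rotations $\rho_1,\dots,\rho_m$, set $K_m := \frac1m\cdot(\rho_1 K \stackrel{.}{+}\cdots\stackrel{.}{+}\rho_m K)$. Superadditivity and degree-one homogeneity of $G$, together with invariance, give $G(K_m)\ge \frac1m\sum_i G(\rho_i K)=G(K)$, whereas linearity and invariance of $S$ give $S(K_m)=S(K)$; hence $\E(K_m)\ge \E(K)$ for every such mean. The crucial ingredient is the Blaschke analogue of Hadwiger's theorem: the rotations can be chosen so that $K_m$ converges in the Hausdorff metric to a ball $B$. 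This would follow by noting that $\mu(K_m)$ is the corresponding rotation mean of $\mu(K)$, that these averaged measures converge weakly to a rotation-invariant measure on $\mathbb{S}^{n-1}$ (necessarily a multiple of $\H^{n-1}$, i.e.\ the surface area measure of a ball), and that the solution of the Minkowski problem depends continuously on its data. Passing to the limit in $\E(K_m)\ge\E(K)$ and using continuity of $\E$ yields $\E(B)\ge\E(K)$, degenerate sets being reached by approximation. Thus the maximum is attained on $\B^n$.

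Under the strict form of \eqref{falseconj}, uniqueness of the ball follows as in Theorem \ref{BMteo}: if $K$ is not a ball there is a rotation $\rho$ with $\rho K$ non-homothetic to $K$, so $G(\frac12\cdot(K\stackrel{.}{+}\rho K))>G(K)$ while $S$ is unchanged, giving $\E(K)<\E(\frac12\cdot(K\stackrel{.}{+}\rho K))\le\E(B)$. For the minimum I would argue that any minimizer in $\K_0^n$ must be Blaschke-indecomposable: if $K=K'\stackrel{.}{+}K''$ with $K',K''$ non-homothetic, then strict Kneser--S\"uss, the exact additivity $S(K)=S(K')+S(K'')$, and the elementary mediant inequality $\frac{a+c}{b+d}\ge\min\{\frac ab,\frac cd\}$ give
\[
\E(K)=\frac{G(K'\stackrel{.}{+}K'')}{S(K')+S(K'')}>\frac{G(K')+G(K'')}{S(K')+S(K'')}\ge \min\{\E(K'),\E(K'')\},
\]
so $K$ cannot be a minimizer. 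Invoking Bronshtein's characterization of Blaschke-indecomposable bodies as simplexes (valid in every dimension) then forces any minimizer with nonempty interior to lie in $\S^n$, the only remaining alternative being a degenerate set in $\K^n\setminus\K_0^n$.

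I expect the genuine obstacle to be the Blaschke version of Hadwiger's theorem: unlike the Minkowski case, where Hadwiger's result can be quoted verbatim, here one must both make sense of the Blaschke rotation mean (through Minkowski's existence theorem, exactly as the Blaschke sum itself is defined) and establish the convergence of the symmetrized surface area measures to the uniform measure, together with the continuous dependence of the body on its surface area measure. A secondary delicate point is the treatment of the degenerate sets in $\K^n\setminus\K_0^n$, for which the Blaschke structure is unavailable and which must therefore be handled by a continuity/approximation argument rather than by the rotation-mean construction.
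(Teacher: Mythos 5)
Your proposal follows exactly the paper's argument: the authors likewise transplant the proof of Theorem \ref{BMteo} to the Blaschke structure, replacing Hadwiger's rotation-mean theorem by its Blaschke analogue (which they simply cite from Grinberg--Zhang, \cite[Corollary 9.3]{GrZh}, rather than proving it as you sketch) and using Bronshtein's characterization \cite[Theorem 1]{Br} of simplexes as the only Blaschke-indecomposable bodies for the minimum. The one ingredient you flag as the genuine obstacle is precisely the point the paper settles by citation, so your reconstruction is complete and faithful.
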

\proof One can  follow the same line as in the proof of Theorem \ref{BMteo}. To obtain the maximality of balls, one has just to use, in place of Hadwiger theorem, an analogue result in the Blaschke structure \cite[Corollary 9.3]{GrZh}. To obtain information on possible minimizers, one has to apply the characterization of simplexes as the unique Blaschke-indecomposable bodies in $\K _0 ^n$ proved in \cite[Theorem 1]{Br}. \qed

\medskip

\begin{remark}{\rm In dimension $n=2$, the statements of Theorems \ref{BMteo} and \ref{KSteo} coincide, because the Blaschke and Minkowski addition, respectively the perimeter and the mean width, agree.} \end{remark}
\medskip

As a straightforward consequence of Theorem \ref{KSteo}, one gets for instance that:
\begin{itemize}
\item \label{ps2} under the constraint of prescribed perimeter, the maximum and the minimum of logarithmic capacity in $\K ^2$
are attained respectively at the
ball and at a possibly degenerate triangle;
\item  under the constraint of prescribed perimeter, the maximum and the minimum of  $p$-capacity in $\K ^2$, with $p \in (1,2)$,
are attained respectively at the ball and at a possibly degenerate triangle.
\end{itemize}

 Actually  for the logarithmic capacity it is known that the solution is a degenerate triangle, {\it i.e.} a segment \cite[p.51]{Po}.  Theorem \ref{KSteo}
 only says that the optimum is a possibly degenerate triangle. Further analysis is required to prove that the optimum is in fact  a segment.
 For the $p$-capacity, this question seems to be open.

 \medskip
Contrary to the Brunn-Minkowski inequality, as far as we know, the only functional which is known to satisfy the Kneser-S\"uss inequality is the volume functional (see \cite[Theorem 7.1.3]{Sch}). And in view of Theorem  \ref{KSteo} it is of relevant interest to understand which functionals do satisfy  the Kneser-S\"uss inequality.  In particular, if the Newtonian capacity in $\R^3$ would satisfy the Kneser-S\"uss inequality, then the solution to the  P\'olya-Szeg\"{o} conjecture would get a step forward. Indeed, in that case the optimal could be only either a tetrahedron or a two dimensional body. Since the disk is known to be optimal among planar domains (see \cite{CFG}), the analysis would be reduced to tetrahedrons, similarly as in the
above mentioned minimization problem for the logarithimic capacity in two dimensions.
 \bigskip

\subsection {Some non-concavity results}\label{counter}

We give below several negative results about the validity of the Kneser-S\"uss inequality $(\ref{falseconj})$ for variational functionals.

\begin{prop}\label{rem1}
In dimension $n=3$, the Newtonian capacity ${\rm Cap} (K)$ does not satisfy the  the Kneser-S\"uss inequality $(\ref{falseconj})$.
\end{prop}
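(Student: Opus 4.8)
The plan is to argue by contradiction via Theorem~\ref{KSteo}, which lets us sidestep any explicit computation of Blaschke sums. In $\R^3$ the Newtonian capacity is positively $1$-homogeneous, i.e.\ $\alpha=n-2=1$, so the exponent $(n-1)/\alpha$ in the Kneser--S\"uss inequality \eqref{falseconj} equals $2$, and the inequality for ${\rm Cap}$ would read ${\rm Cap}^2(K\stackrel{.}{+}L)\ge {\rm Cap}^2(K)+{\rm Cap}^2(L)$ for all $K,L\in\K_0^3$. Suppose it holds. Since ${\rm Cap}$ is continuous for the Hausdorff distance and invariant under rigid motions, Theorem~\ref{KSteo} applies with $F={\rm Cap}$, and its first assertion forces the (scale-invariant) functional $\E(K)={\rm Cap}^2(K)/S(K)$ to attain its maximum over $\K^3$ on $\B^3$; in particular $\E$ would be bounded on $\K^3$ by the finite constant $\E(\B^3)$.

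I would contradict this by exhibiting a family of convex bodies on which $\E$ blows up. The natural candidates are the prolate spheroids $E_c$ of revolution with fixed equatorial semi-axis $a$ and polar semi-axis $c>a$: these are smooth, strictly convex members of $\K_0^3$ for which both quantities are explicit. In the normalization ${\rm Cap}(B_r)=r$, the classical formula for the capacity of an ellipsoid gives ${\rm Cap}(E_c)=\frac{\sqrt{c^2-a^2}}{{\rm arctanh}\big(\sqrt{c^2-a^2}/c\big)}$, while the surface area is $S(E_c)=2\pi a^2\big(1+\tfrac{c}{a\varepsilon}\arcsin\varepsilon\big)$ with $\varepsilon=\sqrt{1-a^2/c^2}$.

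Letting $c\to+\infty$ with $a$ fixed, I would read off the asymptotics ${\rm Cap}(E_c)\sim c/\ln(2c/a)$ and $S(E_c)\sim \pi^2 a\,c$, so that
$$\E(E_c)=\frac{{\rm Cap}^2(E_c)}{S(E_c)}\sim\frac{c}{\pi^2 a\,\ln^2(2c/a)}\longrightarrow+\infty\qquad(c\to+\infty),$$
contradicting the boundedness of $\E$ obtained above; hence ${\rm Cap}$ cannot satisfy \eqref{falseconj}. The one delicate point is the capacity asymptotics: the surface term grows exactly linearly in $c$, so the whole argument rests on the fact that an elongating spheroid has capacity growing almost linearly in $c$ (losing only a logarithmic factor), which makes ${\rm Cap}^2$ outgrow $S$. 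This can be extracted directly from the ${\rm arctanh}$ expression above or, if one prefers to avoid the ellipsoidal integral, from the standard estimate for the capacity of a thin needle-like convex body; the surface-area asymptotics is elementary.
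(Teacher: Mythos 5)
Your proposal is correct and follows essentially the same route as the paper: assume \eqref{falseconj} for ${\rm Cap}$, invoke Theorem \ref{KSteo} to conclude that balls would maximize ${\rm Cap}^2(K)/S(K)$, and contradict this with the divergence of that quotient along thinning prolate ellipsoids. The only difference is that you carry out the ellipsoid asymptotics explicitly (correctly, as far as I can check), whereas the paper simply cites \cite{CFG} for this divergence.
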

\proof
If $(\ref{falseconj})$ would be true for $F(K)  = {\rm Cap} (K)$,
then by Theorem \ref{BMteo} the ball would have maximal capacity
among convex bodies with a prescribed surface area.
This contradicts the known fact that the quotient between the square of capacity and surface area diverges
along a sequence of thinning prolate ellipsoids (see \cite[Section 4]{CFG}).
\qed

\begin{prop}\label{rem2}
In dimension $n=2$, the second Dirichlet eigenvalue of the Laplacian, $\lambda _2 (K)$, does not satisfy
the Brunn-Minkowski inequality $(\ref{BM})$ (nor
the Kneser-S\"uss inequality $(\ref{falseconj})$).
\end{prop}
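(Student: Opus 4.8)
The plan is to argue exactly as in the proof of Proposition \ref{rem1}, converting the putative concavity inequality into an extremality property of the disk that can then be refuted. Since in dimension $n=2$ the Minkowski and Blaschke additions coincide, and likewise the mean width $M$ and the perimeter agree up to a dimensional constant, the inequalities $(\ref{BM})$ and $(\ref{falseconj})$ are the same statement; hence it suffices to disprove $(\ref{BM})$. Recalling that $\lambda_2$ is $(-2)$-homogeneous, and that it is clearly positive, continuous in the Hausdorff distance and rigid-motion invariant, I would apply Theorem \ref{BMteo} with $F=\lambda_2$ and $\alpha=-2$: if $(\ref{BM})$ held for $\lambda_2$, then the disk would maximize the scale-invariant quotient $\E(K)=\lambda_2^{-1/2}(K)/M(K)$ over $\K^2$. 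Equivalently, since $\E>0$ and $\E^{-2}=\lambda_2 M^2$, the disk would \emph{minimize} $\lambda_2(K)\,M(K)^2$, i.e.\ it would minimize $\lambda_2$ under a mean-width (equivalently, perimeter) constraint.

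To obtain a contradiction I would exhibit a local deformation of the disk that lowers this quantity. The key structural fact is that on the unit disk $D$ the second Dirichlet eigenvalue is \emph{double}: $\lambda_2(D)=\lambda_3(D)=j_{1,1}^2$, with eigenspace spanned by $u_{\cos}=J_1(j_{1,1}r)\cos\theta$ and $u_{\sin}=J_1(j_{1,1}r)\sin\theta$. I would then deform $D$ by the normal velocity $V=\varepsilon\cos2\theta$, an infinitesimally elliptical and therefore convexity-preserving perturbation. Because $\int_0^{2\pi}\cos2\theta\,d\theta=0$, the first variation of the perimeter $\int_{\partial D}\kappa\,V\,ds$ vanishes, and since in the plane $M$ is a constant multiple of the perimeter, $M$ is stationary to leading order.

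The core computation is the effect on $\lambda_2$. As the eigenvalue is not simple, the naive Hadamard formula does not apply; instead I would diagonalize the $2\times2$ boundary matrix with entries $-\int_{\partial D}\partial_n u_i\,\partial_n u_j\,V\,ds$ for $i,j\in\{\cos,\sin\}$. Using $\partial_n u_{\cos}=c\cos\theta$ and $\partial_n u_{\sin}=c\sin\theta$ on $\partial D$ with $c=j_{1,1}J_1'(j_{1,1})=j_{1,1}J_0(j_{1,1})\neq0$, the elementary identities for $\cos^2\theta\cos2\theta$, $\sin^2\theta\cos2\theta$ and $\cos\theta\sin\theta\cos2\theta$ render this matrix diagonal with opposite eigenvalues of the form $\mp(\text{positive constant})$. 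Hence the double eigenvalue splits, and the \emph{lower} branch gives $\lambda_2=j_{1,1}^2-\text{(positive const)}\,\varepsilon+o(\varepsilon)$. Combined with the stationarity of $M$, this yields
\[
\frac{d}{d\varepsilon}\big(\lambda_2 M^2\big)\big|_{\varepsilon=0}<0,
\]
so the disk does not minimize $\lambda_2 M^2$ — the desired contradiction. As observed above, the very same example refutes $(\ref{falseconj})$.

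The main obstacle is precisely the eigenvalue degeneracy at the disk: the entire effect rests on the splitting of the double eigenvalue under the $\cos2\theta$ deformation, so the argument must be carried out through the correct matrix shape-derivative formula for multiple eigenvalues rather than the simple-eigenvalue Hadamard formula, and one must verify that the lower branch is the one realized as $\lambda_2$ (which holds because the other branch moves up by the same amount, keeping $\lambda_3\ge\lambda_2$). Checking that the deformed domain is genuinely convex for small $\varepsilon$, and that $M$ is indeed stationary, are by comparison routine.
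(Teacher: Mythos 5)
Your reduction step is exactly the paper's: in dimension $2$ the Minkowski and Blaschke sums (and the mean width and perimeter) coincide, so \eqref{BM} and \eqref{falseconj} are the same inequality, and Theorem \ref{BMteo} applied with $\alpha=-2$ converts its validity into the statement that the disk minimizes $\lambda_2$ under a perimeter constraint. Where you diverge is in refuting that extremality: the paper simply invokes \cite[Theorem 2.5]{BuBuOu} on the geometry of minimizers of $\lambda_2$ under a perimeter constraint (which excludes the disk), whereas you give a self-contained local argument, perturbing the disk by the normal velocity $\varepsilon\cos 2\theta$, noting that the perimeter is stationary, and exploiting the multiplicity $\lambda_2(D)=\lambda_3(D)=j_{1,1}^2$ so that the eigenvalue matrix $-\int_{\partial D}\partial_\nu u_i\,\partial_\nu u_j\,\varphi$ has eigenvalues of opposite sign and the lower branch strictly decreases. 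Your computation is correct: $\partial_\nu u_{\cos}=j_{1,1}J_0(j_{1,1})\cos\theta\neq 0$, the relevant trigonometric integrals give a diagonal matrix with entries $\mp c^2\pi/2$ (times the normalization), the ordering of the spectrum guarantees the lower branch is indeed $\lambda_2(\Omega_\varepsilon)$ for small $\varepsilon$, and convexity of $r=1+\varepsilon\cos 2\theta$ is immediate. What each approach buys: the paper's proof is a two-line reduction resting on a nontrivial external theorem; yours is longer but elementary and self-contained, requires only the one-sided derivative formula for multiple eigenvalues (available in \cite{HePi}), and in fact proves the stronger statement that the disk is not even a \emph{local} minimizer. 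Either is acceptable; if you write yours up, the only point to state carefully is the directional-derivative formula for the ordered eigenvalues at a point of multiplicity, since the naive Hadamard formula does not apply there, as you correctly flag.
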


\proof
If inequality $(\ref{BM})$ (or inequality $(\ref{falseconj})$)
would be true for $F(K)  = \lambda_2(K)$,
by Theorem \ref{BMteo} (or Theorem \ref{KSteo}), the ball would have minimal $\lambda _2$ among planar convex bodies
with a prescribed perimeter.
This contradicts the geometric properties of minimizers for $\lambda _2$ under a perimeter constraint in two dimensions recently shown in
\cite[Theorem 2.5]{BuBuOu}. \qed

\bigskip
\begin{prop}\label{rem3}
In dimension $n=3$, the first Dirichlet eigenvalue of the Laplacian, $\lambda _1 (K)$, does not satisfy
 the Kneser-S\"uss inequality $(\ref{falseconj})$.
\end{prop}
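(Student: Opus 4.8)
The plan is to produce an explicit pair of convex bodies $K,L\in\K_0^3$ for which the Kneser-S\"uss inequality \eqref{falseconj} fails. A preliminary remark is that, contrary to Proposition \ref{rem1}, one \emph{cannot} reach the conclusion through Theorem \ref{KSteo}. Indeed here $F=\lambda_1$ is $\alpha$-homogeneous with $\alpha=-2$, so $(n-1)/\alpha=-1$ and the relevant quotient is $\E(K)=\lambda_1^{-1}(K)/S(K)$; combining the Faber-Krahn inequality, the isoperimetric inequality and the monotonicity of $\lambda_1$ along dilations of the ball, one sees that the ball genuinely \emph{minimizes} $\lambda_1$ under a surface-area constraint, hence genuinely maximizes $\E$. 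Thus the conclusion of Theorem \ref{KSteo} is actually satisfied by $\lambda_1$, and a direct counterexample to \eqref{falseconj} is needed.

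The natural class on which the Blaschke sum is completely explicit is that of axis-parallel rectangular boxes. If $K$ is the box with edge lengths $a,b,c$, then by the polyhedral formula recalled after \eqref{surfmeasure} the measure $\mu(K)$ is supported on $\{\pm e_1,\pm e_2,\pm e_3\}$, with mass $bc$ at $\pm e_1$, $ac$ at $\pm e_2$ and $ab$ at $\pm e_3$. Consequently the Blaschke sum of two boxes is again a box, whose pairs of opposite face-areas are the sums of the corresponding face-areas of the summands. Encoding a box by the triple $(x,y,z):=(bc,ac,ab)$ of its opposite face-areas (every $(x,y,z)\in(\R^+)^3$ being admissible), Blaschke addition becomes ordinary vector addition of triples. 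Moreover the first eigenvalue of a box is explicit, $\lambda_1=\pi^2(a^{-2}+b^{-2}+c^{-2})$, which in the variables $(x,y,z)$ reads $\lambda_1=\pi^2(x^2+y^2+z^2)/(xyz)$.

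Therefore, restricted to boxes, inequality \eqref{falseconj} for $\lambda_1$ is, after cancelling the common factor $\pi^2$, exactly the superadditivity on the cone $(\R^+)^3$ of the function $g(x,y,z):=xyz/(x^2+y^2+z^2)$, namely $g(P+Q)\ge g(P)+g(Q)$. The final step is to check that $g$, although $1$-homogeneous and additive along rays (which recovers the equality case of \eqref{falseconj} for homothetic boxes, since $g(P)+g(sP)=g((1+s)P)$), is \emph{not} superadditive: taking $P=(1,1,1)$ and $Q=(1,1,t)$ one computes $g(P)+g(Q)=\tfrac13+t/(2+t^2)\to\tfrac13$ while $g(P+Q)=4(1+t)/\bigl(8+(1+t)^2\bigr)\to 0$ as $t\to+\infty$, so the inequality is violated for $t$ large. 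Translating back, $K$ is the unit cube and $L$ a suitably flat box, and their Blaschke sum $K\stackrel{.}{+}L$ is an increasingly flat box whose first eigenvalue blows up, so that $\lambda_1^{-1}(K\stackrel{.}{+}L)\to 0$ while the right-hand side retains the cube's contribution $1/(3\pi^2)$.

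The only point I expect to require care is the justification that the atomic measure $\mu(K)+\mu(L)$ is indeed the surface area measure of the asserted box: this is a direct application of Minkowski's existence and uniqueness theorem to measures supported on $\{\pm e_i\}$ with balanced masses on opposite directions (both boxes, and hence their sum, have equal opposite face-areas, so the barycentre condition holds). Once this identification is granted, all remaining computations are elementary, and the explicit pair above disproves \eqref{falseconj} for $\lambda_1$ in dimension $n=3$.
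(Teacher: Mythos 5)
Your proof is correct and follows essentially the same route as the paper's: both arguments observe that the Hadwiger-type reduction of Theorem \ref{KSteo} is unavailable for $\lambda_1$ (the ball really does minimize $\lambda_1$ under a surface-area constraint) and then disprove \eqref{falseconj} by an explicit computation on rectangular parallelepipeds, for which both the Blaschke sum and $\lambda_1$ are explicit. Your reformulation via the face-area triple, which reduces the matter to the failure of superadditivity of $(x,y,z)\mapsto xyz/(x^2+y^2+z^2)$ on $(\R^+)^3$, is a clean and fully verified instance of the ``simple exercise'' that the paper leaves to the reader.
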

\proof
In this case, if we try repeat the same argument used in the proof of Propositions \ref{rem1} and \ref{rem2}, we do not arrive
any longer to a contradiction, since
the ball actually minimizes $\lambda _1$ among convex bodies with
prescribed surface measure. Nevertheless, if one considers two parallelepipeds of the form
$$P_i := \big \{ (x,y,z) \in \R ^3 \ :\ 0 \leq x \leq x_i ,\ 0 \leq y \leq y_i , \ 0 \leq x \leq z_i \big\}\ ,$$
the values of $\lambda _1 (P_1)$, $\lambda _ 1 (P_2)$, $\lambda _1 ( P _1 \stackrel{.}{+} P _2)$ can be computed explicitly.
And it is a simple exercise to check
that there are values of $x_i, y _i, z_i$ for which (\ref{falseconj}) is false (e.g. $x_1=z_1=z_2=1$, $x_2=4$, $y_2 \rightarrow 0$, $y_1 \rightarrow + \infty$). \qed

\bigskip

\section{Global concavity inequalities in a new algebraic structure}\label{3}

The above counterexamples indicate that the Blaschke addition
is not appropriate to gain concavity for functionals involving Dirichlet energies. This induced us to replace it
by a new algebraic structure on convex bodies. The outcoming concavity and indecomposability results
are proved in Section \ref{3a}, and then some applications in shape optimization are discussed in Section \ref{3b}.

\subsection{The abstract results}\label{3a}

Denoting by $\M ^+ ( \mathbb{S}^{n-1})$  the class of positive measures on $\mathbb{S} ^{n-1}$,
we now introduce the class of maps $\mu: \K _0 ^n \to {\cal M} ^+ ( \mathbb{S} ^{n-1})$
which induce in a natural way an algebraic structure on $\K _0 ^n$.

\begin{definition}\label{crucial1} {\rm We say that a map $\mu:\K _0^n \to \M ^ + (\mathbb{S}^{n-1})$ is a {\it parametrization} of  $\K _0 ^n$ if
\begin{itemize}
\item[(i)] the image $\mu (\K _0 ^n)$ is a convex cone:
$\nu _1, \nu _2 \in  \mu (\K _0 ^n)$, $t_1 , t_2>0$  \ $\Rightarrow$ \
$t_1 \nu_1 + t_2 \nu _2 \in \mu (\K _0 ^n)$;

\item[(ii)] $\mu$ is injective up to translations: $\mu (K _1) = \mu (K_2)$ \ $\Leftrightarrow$ \
$K_1$ and $K _2$ are translates.

\smallskip
\hskip -.9cm We say that $\mu$ is a {\it $(\alpha-1)$-parametrization} if, in addition,

\item[(iii)] $\mu$ is $(\alpha -1)$-homogeneous for some $\alpha \neq 1$:  $K \in \K _0 ^n$, $t >0$\ $\Rightarrow$ \ $\mu (tK) = t ^ {\alpha -1} \mu (K)$.
\end{itemize}

}
\end{definition}
For any parametrization of  $\K _0 ^n$ according to the above definition, the following new operations on convex bodies are well-defined.

\begin{definition}\label{crucial2} {\rm
Let $\mu:\K _0^n \to \M ^ + (\mathbb{S}^{n-1})$ be a parametrization of  $\K _0 ^n$.
For every couple of convex bodies $K, L \in \K _0 ^n$ and any $t >0$,
we define
$K +_\mu L$ and $t \cdot _\mu K$ as the convex bodies determined (up to a translation) by the equalities
 $$\mu( K + _\mu L) =  \mu (K ) + \mu (L) \qquad {\rm and } \qquad \mu (t\cdot _\mu K)= t \mu (K)
\ .$$
}\end{definition}

By abuse of notation, we denote $ K + _\mu 0\cdot _\mu L:=K$.
Several comments on these definitions are in order. We begin by giving some examples of parametrizations.

\begin{example}\label{ex} {\rm
Many $\alpha$-homogeneous functionals $F: \K _0 ^n \to \R ^+$ can be written in the integral form
\begin{equation}\label{effe}
F ( K ) = \frac{1}{|\alpha|} \int h (K) \, d \mu (K)\ ,
\end{equation}
being $\mu (\cdot)$ a $(\alpha -1)$-parametrization of $\K _0 ^n$. Here and in the sequel, when writing the expression at the right hand side\ of (\ref{effe}), we implicitly mean that $\alpha \neq 0$.

Some archetypal cases
which fall in this framework are the following:
\begin{itemize}
\item[(i)] {\it The volume ${\rm Vol} (K)$}: in this case $\alpha = n$
and $\mu (K)$ is the surface area measure of $K$ defined in
(\ref{surfmeasure}).

\item[(ii)] {\it The 2-capacity ${\rm Cap} (K)$}: in this case $n\geq 3$, $\alpha = n-2$ and
 \begin{equation}\label{prima}
\mu (K)[\omega] =  \int_{ \nu _K ^ {-1}(\omega)} |\nabla u _K(x)| ^2 \, d\H ^ {n-1}(x)  \qquad
\hbox{ for all Borel sets } \omega \subseteq \mathbb{S} ^ {n-1}\ ,\end{equation}
where $u _K$ is the electrostatic potential of $K$.

\item[(iii)] {\it The first Dirichlet eigenvalue of Laplacian $\lambda_1 (K)$}: in this case $\alpha = -2$ and
$\mu (K)$ is given by  (\ref{prima}), with $u_K$ equal to the first normalized eigenfunction of $K$.

\item[(iv)] {\it The torsional rigidity $\tau (K)$}: in this case $\alpha = n+2$
and
$\mu (K)$ is again written as in (\ref{prima}), begin now $u_K$ the solution to the equation $- \Delta u = 1$ on $K$ with
$u= 0$ on $\partial K$.

\end{itemize}
For any of these functionals,  the associated map $\mu$ turns out to be a parametrization of $\K_0 ^n$ since it takes values in the class ${\cal A}$
of so-called {\it Alexandrov measures} (positive measures on $\mathbb{S} ^ {n-1}$ which
are not concentrated on any equator and have null barycenter), and satisfies the following Minkowski-type theorem:
for every positive measure $\nu \in {\cal A}$
there exists a convex body $K \in \K _0 ^n$ (unique up to a translation) such that $\nu = \mu (K)$.
In case of the volume, this was established by Minkowski for polyhedra and by Alexandrov in the general case
\cite{Al}. The same result for
capacity and for the first Dirichlet eigenvalue of the Laplacian is due to Jerison (see respectively \cite[Theorem 0.8]{J1}
and \cite[Theorem 7.4]{J2}), whereas for torsional rigidity it has been recently obtained by Colesanti and Fimiani
\cite[Theorems 1 and 2]{CoFi}.  Such results ensure that, for any
of these maps $\mu$, we have $\mu (\K _0 ^n)= {\cal A}$, which is a convex cone, and injectivity holds up to translations.
Thus (i) and (ii) in Definition \ref{crucial1} are satisfied. We remark that also condition (iii) of the same definition is fulfilled
(except for ${\rm Vol} (K)$ in dimension $n=1$, and ${\rm Cap} (K)$ in dimension $n=3$).
}\end{example}

\begin{remark}
{\rm
(i) If $\mu$ is a $(\alpha-1)$-parametrization, by homogeneity we have $t\cdot _\mu K:=  t ^ {1/(\alpha-1)} K$.

(ii) When $\mu$ is the surface area measure, Definition \ref{crucial2} gives exactly the Blasckhe structure.
In spite, as pointed out in \cite{GoSch}, area measures of intermediate order cannot be used
in order to define an addition of convex bodies.

(iii) For any of the parametrizations appearing in Example \ref{ex},
the $\mu$-addition of two convex bodies provides a result geometrically different from their
Minkowski addition. For instance,  in $\R^3$ one can take two regular tetrahedra,
one of which is obtained by the other through a rotation of $\pi/2$, such that
their Minkowski sum has $14$ faces ({\it cf.} \cite{Alex}); clearly their $\mu$-sum, for each of the parametrizations $\mu$ in Example \ref{ex},
has just $8$ faces, whose areas depend on the choice of $\mu$.
}
\end{remark}

\medskip
Relying on the algebraic structure introduced in Definition \ref{crucial2},
a new natural  notion of concavity emerges for the associated integral functionals.

\begin{definition}{\rm Let  $\mu$ be a $(\alpha -1)$-parametrization of $\K_0 ^n$, and let $F : \K _0 ^ n \to \R^+$ be given by $(\ref{effe})$.  We say that $F$ is $\mu$-concave  if
\begin{equation}\label{KS}
F^ {1- (1/\alpha)} (K +_\mu L ) \geq  F ^ {1- (1/\alpha)} (K) + F ^ {1- (1/\alpha)}  (L)\qquad \forall K, L \in \K_0 ^n\ .
\end{equation}
}
\end{definition}

\medskip
We are going to show that this concavity property is strictly related with the Brunn-Minkowski inequality. In particular, it holds true for any of the functionals considered in Example \ref{ex}, since they satisfy
the assumptions of Theorem \ref{abstract1} below (a comprehensive reference for this claim is \cite{Co}).
Thus, we may affirm that (\ref{KS}) it is the natural extension of the Kneser-S\"uss inequality for the volume functional.

\bigskip
\begin{teo}\label{abstract1}
Let $\mu$ be a
$(\alpha -1)$-parametrization $\mu$ of $\K _0^n$, and let $F : \K _0^ n \to \R^+$ be given by $(\ref{effe})$.
Assume that $F$ satisfies the Brunn-Minkowski inequality $(\ref{BM})$ and that
its first variation under Minkowski sums can be written as
\begin{equation}\label{rep1}
\displaystyle{\frac{d}{dt} F(K+ t L) _{| _{t= 0+}} = \frac{\alpha}{|\alpha|}\, \int h (L)  \, d \mu ( K)}\ .
\end{equation}
Then $F$ is $\mu$-concave. Moreover, if equality occurs in $(\ref{BM})$ only when $K$ and $L$ are homothetic, then the same property holds for
$(\ref{KS})$.
\end{teo}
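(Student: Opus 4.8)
The plan is to reproduce, in this abstract setting, the classical route from the Brunn--Minkowski inequality to the Kneser--S\"uss inequality, the bridge being a \emph{mixed} inequality of Minkowski's first type. It is convenient to introduce the bilinear quantity
\[
F_1(K,L):=\frac{1}{|\alpha|}\int h(L)\,d\mu(K),\qquad K,L\in\K_0^n,
\]
so that $F(K)=F_1(K,K)$ by $(\ref{effe})$, and the first variation formula $(\ref{rep1})$ reads $\frac{d}{dt}F(K+tL)\big|_{t=0+}=\alpha\,F_1(K,L)$. The crucial feature is that $F_1(K,\cdot)$ is \emph{linear in the measure} $\mu(K)$, so that the defining identity $\mu(K+_\mu L)=\mu(K)+\mu(L)$ transforms, for $C:=K+_\mu L$, into
\[
F(C)=\frac{1}{|\alpha|}\int h(C)\,d\mu(C)=F_1(K,C)+F_1(L,C).
\]

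First I would establish the concavity on $[0,+\infty)$ of the function $g(t):=F^{1/\alpha}(K+tL)$. This follows from $(\ref{BM})$ together with $\alpha$-homogeneity, upon noting the Minkowski identity $K+tL=\lambda(K+t_1L)+(1-\lambda)(K+t_2L)$ whenever $t=\lambda t_1+(1-\lambda)t_2$; applying $(\ref{BM})$ to the two summands and using $F^{1/\alpha}(sA)=sF^{1/\alpha}(A)$ gives $g(t)\ge\lambda g(t_1)+(1-\lambda)g(t_2)$. Note that this step is insensitive to the sign of $\alpha$. Concavity yields the tangent estimate $g(1)\le g(0)+g'_+(0)$, where by the chain rule and $(\ref{rep1})$ one has $g'_+(0)=F(K)^{(1-\alpha)/\alpha}F_1(K,L)$, while $(\ref{BM})$ at $t=1$ gives $g(1)\ge g(0)+F^{1/\alpha}(L)$. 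Cancelling $g(0)=F^{1/\alpha}(K)$ between the two estimates produces the mixed inequality
\[
F_1(K,L)\ \ge\ F(K)^{(\alpha-1)/\alpha}\,F(L)^{1/\alpha},\qquad\forall\,K,L\in\K_0^n.
\]

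To conclude, I would apply this mixed inequality to the pairs $(K,C)$ and $(L,C)$ and insert the resulting bounds into the identity $F(C)=F_1(K,C)+F_1(L,C)$, obtaining
\[
F(C)\ \ge\ F(C)^{1/\alpha}\Big(F(K)^{(\alpha-1)/\alpha}+F(L)^{(\alpha-1)/\alpha}\Big).
\]
Dividing by $F(C)^{1/\alpha}>0$ gives exactly $(\ref{KS})$ with $C=K+_\mu L$, since $(\alpha-1)/\alpha=1-(1/\alpha)$. For the equality statement, I would observe that equality in the mixed inequality for a pair $(A,B)$ forces, through the chain above, equality in $(\ref{BM})$ for $A$ and $B$, hence (under the hypothesis) that $A$ and $B$ are homothetic; since equality in $(\ref{KS})$ forces equality in the mixed inequality for both $(K,C)$ and $(L,C)$, one gets $K$ and $L$ each homothetic to $C$, and therefore to each other.

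The main obstacle is the middle step, the rigorous derivation of the mixed Minkowski-type inequality. The delicate points are the concavity of $g$ and the identification of its one-sided derivative $g'_+(0)$ with the integral first variation $(\ref{rep1})$ via the chain rule; both must be handled with care because $\alpha$ may be negative, so that $s\mapsto s^{1/\alpha}$ is decreasing and the power rearrangements reverse. Once the mixed inequality is secured, the passage to $(\ref{KS})$ and the equality analysis are purely algebraic, relying only on the additivity $\mu(K+_\mu L)=\mu(K)+\mu(L)$ and the strict positivity of $F$.
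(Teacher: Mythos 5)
Your proof is correct and follows essentially the same route as the paper's: the splitting $F(K+_\mu L)=F_1(K,C)+F_1(L,C)$ coming from the additivity of $\mu$, the lower bound $F_1(C,C')\ge F^{1-(1/\alpha)}(C)\,F^{1/\alpha}(C')$ on the mixed term deduced from the Brunn--Minkowski inequality through the first variation formula \eqref{rep1}, and the same reduction of the equality case to equality in \eqref{BM} for $K$ (resp.\ $L$) and $K+_\mu L$. The only cosmetic difference is that you isolate the Minkowski-first-type mixed inequality as a named intermediate step, derived from concavity of $t\mapsto F^{1/\alpha}(K+tL)$ plus a tangent-line bound, whereas the paper obtains the same inequality directly from the nonnegativity of $f(t)=F^{1/\alpha}(C+tC')-F^{1/\alpha}(C)-tF^{1/\alpha}(C')$ together with $f(0)=0$, hence $f'(0^+)\ge 0$.
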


\begin{remark}
{\rm Notice that, if $F: {\cal K} _0^n \to \R ^+$ is a $\alpha$-homogeneous functional which
satisfies (\ref{rep1}), it follows automatically that
it can be written in the integral form (\ref{effe}). This means that only assumption (\ref{rep1}) is crucial. We emphasize that such assumption is less restrictive than it may appear since \eqref{rep1} is nothing else than a representation formula for the shape derivative.
Indeed, if the functional $F$ is shape derivable, under mild assumptions the Hadamard structure theorem for the first shape derivative applies. Consequently, for any deformation by a vector field $V \in C_0^\infty (\R^n; \R^n)$,
the first shape derivative is a linear form depending on the normal component of $V$ on $\partial K$. Moreover, under continuity assumptions, the shape gradient can be identified with a function $g_K \in L^1 (\partial K)$,  so that
 $$\displaystyle{\frac{d}{dt} F\big ((Id +tV)(K) \big ) _{| _{t= 0^+}}}= \int_{\partial K} g_K (V\cdot \nu _K) d{\mathcal H}^{n-1}\ .$$
Though the deformation through the Minkowski addition is in general not induced by a vector field,
  a similar formal analysis leads to representing the derivative in the left hand side of \eqref{rep1} as
 $$\displaystyle{\frac{d}{dt} F(K+ t L) _{| _{t= 0^+}}} =  \int_{\mathbb{S}^{n-1}} \tilde g_K h(L) d{\mathcal H}^{n-1}.$$
Thus, setting $\mu (K):= |\alpha| \tilde  g_K {\mathcal H}^{n-1} \res \mathbb{S} ^ {n-1}$,  we find a map $K\mapsto \mu(K)$ which allows to represent $F$ in the integral form (\ref{effe}). Of course, establishing whether $\mu$ is in fact a parametrization according to Definition \ref{crucial1} is not a trivial question.}
\end{remark}

\proof\hskip -.1cm[of Theorem \ref{abstract1}]
By using (\ref{effe}) and (\ref{rep1}), we may write:
$$\begin{array}{ll} F (K + _\mu L) & = \displaystyle{\frac{1}{|\alpha|} \int h (K + _\mu L) \, d \mu (K + _\mu L)} \\
& = \displaystyle{\frac{1}{|\alpha|} \int h (K + _\mu L) \, d \mu (K ) + \frac{1}{|\alpha|} \int h (K + _\mu L) \, d \mu (L)} \\
& = \displaystyle{\frac{1}{\alpha}\frac{d}{dt} F(K+ t (K +_\mu L)) _{| _{t= 0^+}} }+ \displaystyle{\frac{1}{\alpha} \frac{d}{dt} F(L+ t (K+ _\mu L)) _{| _{t= 0^+}}}
\end{array}
$$
Since by assumption $F$ satisfies the Brunn Minkowski inequality (\ref{BM}), for all $C$ and $C'$ in $\K ^n$ the function
\begin{equation}\label{effino}
f(t):= F ^ {1/\alpha} (C+ tC') - F ^ {1 /\alpha} (C) - t F ^ {1/\alpha} (C')
\end{equation}
is nonnegative for $t \geq 0$. Since $f(0) = 0$, this means that $f' (0) \geq 0$, which gives
$$\frac{1}{\alpha}\frac{d}{dt} F(C+ t C') _{| _{t= 0^+}}
\geq  F^ {1-(1/\alpha)} (C) F ^ {1/\alpha} (C')\ .$$
Applying this inequality once with $C= K$ and $C' = K + _\mu L$, and once with $C= L$ and $C' = K + _\mu L$, we infer
$$F (K + _\mu L) \geq \big [ F^ {1-(1/\alpha)} (K) +  F^ {1-(1/\alpha)} (L) \big ] F ^ {1/\alpha} (K + _\mu L) \ ,$$
and inequality (\ref{KS})  follows dividing by $F ^ {1/\alpha} (K + _\mu L)$.

Assume now that (\ref{KS}) holds with equality sign. By the above proof it follows that, taking $C= K$ and $C' = K + _\mu L$, the function $f$ defined as in (\ref{effino}) satisfies $f' (0) = 0$. But,
by the Brunn Minkowksi inequality (\ref{BM}), we know that $f$ is both nonnegative and concave. Hence it must vanish identically.
By assumption, this implies that
$K$ and $K + _\mu L$ are homothetic, and hence  $K$ and $L$ are homothetic.  \qed
\bigskip

Under the same assumptions of Theorem \ref{abstract1}, following the same lines  one can prove the following refined concavity inequality. Let
$$\sigma(K,L):=\max\Big \{\frac{F(K)}{F(L)}, \frac{F(L)}{F(K)}\Big  \},$$
and $R:\K ^n_0\times \K^n_0 \rightarrow \R^+$ be an ``asymmetry distance'', {\it i.e.}
\begin{itemize}
\item [(i)] $\forall K,L \in   \K^n_0, \quad \forall t,s>0, \quad R(tK,sL)=R(K,L)$,
\item [(ii)]
$\exists c>0, \quad \forall \, K, L, S \in \K ^n_0,  \quad R(K,L)+R(L,S)\ge c R(K,S).$
 \end{itemize}

\begin{prop} Under the same assumptions of Theorem \ref{abstract1}, 
if the Brunn-Minkowski inequality \eqref{BM} holds in the quantitative form
\begin{equation}\label{BMq}
F^{1/\alpha} (K +L) \geq (F ^ {1/\alpha} (K ) + F ^ {1/\alpha} (L)) \left (1+\frac{R(K,L)}{\sigma(K,L)^ {1/\alpha}} \right ) \qquad \forall \, K, L \in \K ^n_0 \ ,
\end{equation}
then  \eqref{KS} holds in the quantitative form
\begin{equation}\label{KSq}
F^ {1- (1/\alpha)} (K +_\mu L ) \geq  (F ^ {1- (1/\alpha)} (K) + F ^ {1- (1/\alpha)}  (L)) \left (1+\frac{c}{2}\frac{R(K,L)}{\sigma(K,L)^ {1-1/\alpha}} \right )  \qquad \forall K, L \in \K_0 ^n\ .
\end{equation}
\end{prop}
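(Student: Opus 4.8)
The plan is to retrace the proof of Theorem \ref{abstract1}, but now carrying the multiplicative error factor from the quantitative Brunn-Minkowski inequality \eqref{BMq} through each inequality. The starting identity is unchanged: using \eqref{effe} and \eqref{rep1}, we still have
$$
F(K +_\mu L) = \frac{1}{\alpha}\frac{d}{dt} F(K+ t (K +_\mu L))_{|_{t=0^+}} + \frac{1}{\alpha}\frac{d}{dt} F(L+ t (K +_\mu L))_{|_{t=0^+}}.
$$
The key step is to sharpen the lower bound on each first-variation term. Fixing $C$ and $C'$, I would consider again $g(t):=F^{1/\alpha}(C+tC')-F^{1/\alpha}(C)-tF^{1/\alpha}(C')$, but now replace the crude bound $g(t)\ge 0$ by the stronger estimate that \eqref{BMq} provides at, say, $t=1$: namely $g(1)\ge (F^{1/\alpha}(C)+F^{1/\alpha}(C'))\,R(C,C')\,\sigma(C,C')^{-1/\alpha}$. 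Since $g$ is concave with $g(0)=0$, one has $g'(0)\ge g(1)$, which upgrades the earlier estimate $f'(0)\ge 0$ to
$$
\frac{1}{\alpha}\frac{d}{dt} F(C+tC')_{|_{t=0^+}} \geq F^{1-(1/\alpha)}(C)\,F^{1/\alpha}(C') + (F^{1/\alpha}(C)+F^{1/\alpha}(C'))\,\frac{R(C,C')}{\sigma(C,C')^{1/\alpha}}.
$$

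Next I would apply this refined inequality twice, with $(C,C')=(K,K+_\mu L)$ and $(C,C')=(L,K+_\mu L)$, and sum. The leading terms reassemble exactly as in the proof of Theorem \ref{abstract1} to give the product $[F^{1-(1/\alpha)}(K)+F^{1-(1/\alpha)}(L)]\,F^{1/\alpha}(K+_\mu L)$; the new task is to control the two error contributions. Here I would invoke the two axioms for the asymmetry distance $R$: scale-invariance (i) lets me replace $R(K,K+_\mu L)$ and $R(L,K+_\mu L)$ by quantities comparable to $R(K,L)$, while the quasi-triangle inequality (ii) with $S=K+_\mu L$ yields $R(K,L)\le c^{-1}(R(K,K+_\mu L)+R(L,K+_\mu L))$, so that the sum of the two error terms is bounded below by a constant multiple of $R(K,L)$. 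After dividing through by $F^{1/\alpha}(K+_\mu L)$, one recovers \eqref{KSq} with the constant $c/2$, the factor $2$ coming from distributing the single $R(K,L)$ bound between the two terms and from the exponent bookkeeping that turns $\sigma^{-1/\alpha}$ into $\sigma^{-(1-1/\alpha)}$.

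The main obstacle I anticipate is precisely this bookkeeping of the asymmetry and ratio factors: the error term in \eqref{BMq} involves $\sigma(C,C')^{-1/\alpha}$ evaluated at the auxiliary pairs $(K,K+_\mu L)$ and $(L,K+_\mu L)$, whereas the target \eqref{KSq} wants $\sigma(K,L)^{-(1-1/\alpha)}$. Reconciling these requires comparing $F(K+_\mu L)$ with $F(K)$ and $F(L)$ and tracking how the homogeneity exponent $\alpha$ converts the $\sigma$-power when one passes from the $F^{1/\alpha}$ normalization of Brunn-Minkowski to the $F^{1-(1/\alpha)}$ normalization of Kneser-S\"uss. I expect that only the scale-invariance of $R$ is genuinely needed to neutralize the dilation by $\sigma$-type factors, with the constant $c$ being an honest artifact of axiom (ii); the delicate point is ensuring the inequalities all point the correct way after dividing by the common factor $F^{1/\alpha}(K+_\mu L)$ and that no spurious dependence on $K+_\mu L$ survives in the final asymmetry term.
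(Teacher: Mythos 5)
Your overall architecture is the right one (the paper itself offers nothing beyond ``following the same lines as Theorem \ref{abstract1}''), but the quantitative step --- which you yourself flag as the main obstacle --- does not close the way you have set it up. Two issues. First, a bookkeeping slip: from $g'(0)\ge g(1)$ the error term inherits the factor $F^{1-(1/\alpha)}(C)$, i.e.
$$\frac{1}{\alpha}\frac{d}{dt}F(C+tC')_{|_{t=0^+}}\ \ge\ F^{1-(1/\alpha)}(C)\,F^{1/\alpha}(C')\ +\ F^{1-(1/\alpha)}(C)\,g(1)\ ,$$
which you dropped. Second, and more seriously, evaluating \eqref{BMq} at $t=1$ with $C'=K+_\mu L$ leaves error terms carrying $\sigma(K,K+_\mu L)^{-1/\alpha}$, and after summing and dividing by $F^{1/\alpha}(K+_\mu L)$ these come weighted by a factor of order $F^{1/\alpha}(K)/F^{1/\alpha}(K+_\mu L)$ (resp.\ with $L$). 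The hypotheses give a lower bound but no upper bound on $F(K+_\mu L)$, so this factor cannot be bounded below, and neither axiom rescues it: scale invariance (i) does \emph{not} relate $R(K,K+_\mu L)$ to $R(K,L)$ --- that is the job of the quasi-triangle inequality (ii), which you invoke correctly but which only controls the sum $R(K,K+_\mu L)+R(L,K+_\mu L)$, not the unwanted $\sigma$- and $F$-weights. As written, the chain of inequalities does not reassemble into \eqref{KSq}.

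The repair stays entirely within your scheme and is where scale invariance genuinely enters. Since $g$ is concave with $g(0)=0$, use $g'(0)\ge g(t)/t$ not at $t=1$ but at the normalized scale $t=\big(F(C)/F(C')\big)^{1/\alpha}$, for which $\sigma(C,tC')=1$ while $R(C,tC')=R(C,C')$ by axiom (i). This yields the clean bound $\frac{1}{\alpha}\frac{d}{dt}F(C+tC')_{|_{t=0^+}}\ge F^{1-(1/\alpha)}(C)F^{1/\alpha}(C')\big(1+2R(C,C')\big)$, with no trace of $\sigma$ or of $F(K+_\mu L)$ in the error. Summing the two applications, dividing by $F^{1/\alpha}(K+_\mu L)$, applying axiom (ii), and using the elementary estimate $\min\{F^{1-(1/\alpha)}(K),F^{1-(1/\alpha)}(L)\}\ge\tfrac12\big(F^{1-(1/\alpha)}(K)+F^{1-(1/\alpha)}(L)\big)\sigma(K,L)^{-(1-1/\alpha)}$ then produces \eqref{KSq} (in fact with constant $c$ in place of $c/2$). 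Without this renormalization the $\sigma$-exponents cannot be reconciled in the way you hope.
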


In particular,   for $F(K)=\mbox{Vol} (K)$, we obtain a quantitative version of the Kneser-S\"uss inequality. Let us denote for every $K,L \in
 \K_0 ^n$ the Fraenkel relative asymmetry
 $$A(K,L):=\inf_{x_0\in \R^n} \Big \{\frac{\mbox{Vol} (K\Delta (x_0+ \lambda L))}{\mbox{Vol}(K)}  \Big \}, \quad \mbox{where } \lambda :=\Big(\frac{{\rm Vol} (K)}{{\rm Vol}(L)}\Big )^{1/n}.$$
   We notice  that, for every $p \ge 1$, $A(\cdot ,\cdot)^p$ is
an  asymmetry distance which satisfies (i) and (ii) above.
\bigskip
\begin{cor}\label{ksqq}
 There exists a constant $c_n>0$ depending only on the space dimension such that
 $$ {\rm Vol}(K \stackrel{.}{+} L )^ {1- (1/n)} \geq  ( {\rm Vol}(K) ^ {1- (1/n)}+ {\rm Vol}(L) ^ {1- (1/n)})\left (1+\frac{A(K,L)^2}{c_n\sigma(K,L)^ {1-(1/n)}} \right )  \qquad \forall K, L \in \K_0 ^n\ .$$
\end{cor}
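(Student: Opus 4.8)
The plan is to obtain Corollary \ref{ksqq} as a direct specialization of the preceding Proposition to the volume functional. For $F = {\rm Vol}$ one has $\alpha = n$, while the associated parametrization $\mu$ is the surface area measure; hence $+_\mu$ is exactly the Blaschke sum $\stackrel{.}{+}$, and the exponent $1-1/\alpha$ equals $(n-1)/n$. With these identifications the conclusion \eqref{KSq} of the Proposition is, verbatim, the inequality claimed in the Corollary, up to naming the constant. So the whole task reduces to feeding the Proposition a quantitative Brunn--Minkowski inequality of the form \eqref{BMq} in which the asymmetry term is a multiple of $A(K,L)^2$.

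First I would recall the refined Brunn--Minkowski inequality for convex bodies of \cite{fmp09}: there is a dimensional constant $C(n)>0$ such that
$$
{\rm Vol}^{1/n}(K+L) \geq \big({\rm Vol}^{1/n}(K) + {\rm Vol}^{1/n}(L)\big)\left(1 + \frac{A(K,L)^2}{C(n)\,\sigma(K,L)^{1/n}}\right) \qquad \forall\, K, L \in \K_0^n.
$$
This is precisely \eqref{BMq} for $\alpha = n$ once we set $R(K,L) := A(K,L)^2/C(n)$: the exponent $\sigma^{1/\alpha}=\sigma^{1/n}$ matches, and the scale invariance of the Fraenkel asymmetry gives property (i) of an asymmetry distance for $R$ (indeed for any power $A^p$). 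The main nonroutine point is to check that $R$, equivalently $A(\cdot,\cdot)^2$, also satisfies the quasi-triangle inequality (ii) for some $c=c(n)>0$; heuristically one compares the optimal translations and dilations realizing $A(K,L)$, $A(L,S)$, $A(K,S)$, applies the triangle inequality for the symmetric difference of sets, and uses subadditivity of $t\mapsto t^{1/2}$ to bound $A(K,S)^2$ by a multiple of $A(K,L)^2+A(L,S)^2$. This is exactly the assertion recorded just before the statement, that $A(\cdot,\cdot)^p$ is an asymmetry distance for every $p\ge 1$, which I am entitled to assume.

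Granting this, I would simply invoke the Proposition. Its conclusion \eqref{KSq}, with $F={\rm Vol}$ and $R=A^2/C(n)$, yields
$$
{\rm Vol}^{(n-1)/n}(K \stackrel{.}{+} L) \geq \big({\rm Vol}^{(n-1)/n}(K) + {\rm Vol}^{(n-1)/n}(L)\big)\left(1 + \frac{c}{2}\,\frac{A(K,L)^2/C(n)}{\sigma(K,L)^{1-1/n}}\right),
$$
and setting $c_n := 2C(n)/c$ produces exactly the claimed inequality. In short, once \eqref{BMq} is recognized, the argument is pure bookkeeping of constants; the only genuine inputs beyond the Proposition are the quantitative Brunn--Minkowski inequality of \cite{fmp09} and the quasi-triangle property of $A^2$, the latter being the step I would expect to require the most care in a fully self-contained account.
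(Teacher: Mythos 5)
Your proposal is correct and follows exactly the paper's route: the authors likewise obtain the corollary by specializing the preceding proposition to $F=\mathrm{Vol}$ (so $\alpha=n$, $\mu$ the surface area measure, $+_\mu$ the Blaschke sum) and feeding in the quantitative Brunn--Minkowski inequality of \cite{fmp09} with $R=A^2/C(n)$, relying on the already-stated fact that $A(\cdot,\cdot)^p$ is an asymmetry distance. Your bookkeeping of exponents and constants matches the paper's (one-line) proof; no discrepancy to report.
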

\begin{proof}
The proof is a direct consequence of the previous proposition and of the quantitative Brunn-Minkowski inequality proved in \cite{fmp09}.
\end{proof}
\bigskip
As a consequence of Theorem \ref{abstract1}, functionals as in the statement are monotone with respect to the associated parametrization.
\begin{cor}
Let $F$ satisfy the assumptions of Theorem \ref{abstract1}. If $K, L \in \K _0^n$ are such that
$\mu (K) \leq \mu (L)$ (as measures on $\mathbb{S}^{n-1}$), then $F(K) \leq F(L)$.
\end{cor}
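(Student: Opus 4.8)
The plan is to read off, from the proof of Theorem~\ref{abstract1} itself, the Minkowski-type ``first inequality''
$$\frac{1}{|\alpha|}\int h(C')\,d\mu(C)\ \geq\ F^{1-1/\alpha}(C)\,F^{1/\alpha}(C')\qquad\forall\,C,C'\in\K_0^n,$$
and then to combine it with the hypothesis $\mu(K)\le\mu(L)$, using that the integral $\int h(L)\,d\mu(\cdot)$ against a fixed nonnegative support function is monotone in the measure.

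First I would observe that the displayed inequality is precisely what the argument proving Theorem~\ref{abstract1} already produces: the function $f(t):=F^{1/\alpha}(C+tC')-F^{1/\alpha}(C)-t\,F^{1/\alpha}(C')$ is nonnegative for $t\geq0$ by \eqref{BM} and vanishes at $t=0$, hence $f'(0^+)\geq0$; rewriting $f'(0^+)$ by means of \eqref{rep1} and multiplying through by the positive factor $F^{1-1/\alpha}(C)$ yields it. I would apply this with $C=K$ and $C'=L$, obtaining $\frac{1}{|\alpha|}\int h(L)\,d\mu(K)\geq F^{1-1/\alpha}(K)\,F^{1/\alpha}(L)$.

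Next I would reduce to the case $h(L)\geq0$. Replacing $L$ by a translate $L+x_0$ with $0$ in its interior leaves the hypothesis unchanged, since $\mu$ is translation invariant (Definition~\ref{crucial1}(ii)); moreover $F$ is unaffected as well, because the measures $\mu(\cdot)$ have null barycenter, so that $\int h(L+x_0)\,d\mu(L)=\int h(L)\,d\mu(L)$ and hence $F(L+x_0)=F(L)$ by \eqref{effe}. Thus I may assume $h(L)>0$ on $\mathbb{S}^{n-1}$. Then $\mu(K)\le\mu(L)$ gives $\int h(L)\,d\mu(K)\le\int h(L)\,d\mu(L)=|\alpha|\,F(L)$, that is $\frac{1}{|\alpha|}\int h(L)\,d\mu(K)\le F(L)$. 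Chaining this with the inequality of the previous step, $F(L)\geq F^{1-1/\alpha}(K)\,F^{1/\alpha}(L)$; dividing by $F^{1/\alpha}(L)>0$ leaves $F^{1-1/\alpha}(L)\geq F^{1-1/\alpha}(K)$. Since the exponent $1-1/\alpha$ is positive (equivalently $\alpha<0$ or $\alpha>1$, as holds for each functional in Example~\ref{ex}), the power $t\mapsto t^{1-1/\alpha}$ is increasing on $\R^+$ and the conclusion $F(K)\le F(L)$ follows.

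I expect the points requiring care to be bookkeeping rather than conceptual. The sign of $\alpha$ must be tracked: the crucial multiplication in the first step is by $F^{1-1/\alpha}(C)>0$, so the direction is preserved, and the terminal exponent $1-1/\alpha$ is positive precisely in the regime $\alpha\notin[0,1]$ that is relevant here. The other delicate point is the reduction to $h(L)\ge0$, which is exactly where the null-barycenter property of the parametrizing measures enters. I would deliberately avoid the tempting alternative of writing $\mu(L)=\mu(K)+(\mu(L)-\mu(K))$ and invoking the $\mu$-concavity \eqref{KS}, since the positive measure $\mu(L)-\mu(K)$ need not belong to the image cone $\mu(\K_0^n)$ (it may be concentrated on an equator), so it need not represent a genuine body of $\K_0^n$.
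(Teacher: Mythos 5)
Your proof is correct, but it follows a genuinely different route from the paper's. The paper argues in the style of Alexandrov's monotonicity proof for mixed volumes: for $t\in(0,1)$ it sets $\mu_t:=\mu(L)-t\mu(K)$, solves the Minkowski-type problem to find $M_t$ with $\mu(M_t)=\mu_t$, writes $L=M_t+_\mu t\cdot_\mu K$, applies the $\mu$-concavity \eqref{KS} (i.e.\ the \emph{conclusion} of Theorem \ref{abstract1}), drops the $M_t$ term, and lets $t\to1$. Note that this is precisely the ``tempting alternative'' you chose to avoid --- except that the paper takes $t<1$, so that $\mu_t\geq(1-t)\mu(K)$ is still an Alexandrov measure and the equator degeneracy you worried about only appears in the limit; even so, the existence of $M_t$ is not guaranteed by Definition \ref{crucial1}(i) alone (a convex cone is closed under positive combinations, not differences) and really uses $\mu(\K_0^n)=\mathcal{A}$ as in Example \ref{ex}. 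Your argument bypasses this solvability issue entirely: you use only the \emph{hypotheses} of Theorem \ref{abstract1} (namely \eqref{BM} and \eqref{rep1}) to get $\frac{1}{|\alpha|}\int h(L)\,d\mu(K)\geq F^{1-1/\alpha}(K)F^{1/\alpha}(L)$, and then exploit monotonicity of the integral in the measure together with \eqref{effe}. The price is the normalization $h(L)\geq 0$, which you correctly obtain by translating $L$ and invoking the null-barycenter property --- an assumption that, like the paper's use of $\mu(\K_0^n)=\mathcal{A}$, is implicit in the abstract framework but satisfied in all the intended examples. Both proofs end at $F^{1-1/\alpha}(L)\geq F^{1-1/\alpha}(K)$ and both need $1-1/\alpha>0$ to conclude; you flag this explicitly, the paper does not. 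On balance your argument is more elementary and slightly more robust with respect to the structural assumptions on the image cone, while the paper's is the one that generalizes the classical volume case verbatim.
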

\proof We argue as in the proof of \cite[Theorem 7.1]{Alex}. For $t \in (0,1)$, set $\mu_t: = \mu (L) - t \mu (K)$. Since by assumption $\mu (L) \geq \mu (K)$, and since $\mu$ is a parametrization of $\K _0^n$,
there exists a convex body $M_t$ (unique up to a translation), such that $\mu _t = \mu (M_t)$. Then we have
$\mu (L) = \mu (M_t) + t \mu (K)$, which implies $L = M _t + _\mu t \cdot _\mu K$. Then (\ref{KS}) implies
$$F ^{1-(1/\alpha)}(L) = F^{1-(1/\alpha)}\big (M _t + _\mu t \cdot _\mu K \big ) \geq F^{1-(1/\alpha)}(M_t) + t F^{1-(1/\alpha)}(K) \geq t F^{1-(1/\alpha)} (K)\ , $$
and the thesis follows by letting $t$ tend to $1$.
\qed

\bigskip
With a proof similar to the one of Theorem \ref{abstract1},
we obtain the following ``dual'' statement.

\begin{teo}\label{abstract2}
Let $\mu$ be a
$(\alpha -1)$-parametrization of $\K_0 ^n$, and
let $F : \K _0^ n \to \R^+$ be given by $(\ref{effe})$.
Assume that $F$ is $\mu$-concave and that
its first variation under $\mu$-sums can be written as
\begin{equation}\label{rep2}
\displaystyle{\frac{d}{dt}
F(L+_\mu t \cdot _\mu K) _{| _{t= 0^+}} = \frac{\alpha}{|\alpha|} \frac{1}{\alpha -1}\, \int h (L)  \, d \mu ( K)}\ .\end{equation}
Then $F$ satisfies the Brunn Minkowski inequality $(\ref{BM})$. Moreover, if equality occurs in $(\ref{KS})$ only when $K$ and $L$ are homothetic, the same property holds for $(\ref{BM})$.
\end{teo}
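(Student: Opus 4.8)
The plan is to mirror the proof of Theorem~\ref{abstract1}, exchanging the roles played by the two concavity inequalities and the two addition operations. The key observation is that the proof of Theorem~\ref{abstract1} was built on two ingredients: the integral representation \eqref{effe} combined with the first-variation formula \eqref{rep1} for Minkowski sums, which together allowed $F(K+_\mu L)$ to be rewritten as a sum of two derivatives; and the convexity-in-$t$ of the function $f$ coming from the assumed Brunn--Minkowski inequality. Here the hypotheses are dual: I am given $\mu$-concavity \eqref{KS} and the first-variation formula \eqref{rep2} for $\mu$-sums, and I want to deduce the Brunn--Minkowski inequality \eqref{BM}. So the strategy is to run the same computation with $+$ and $+_\mu$ interchanged.

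First I would establish the analogue of the opening computation: using \eqref{effe} together with the additivity $\mu(K+L)=\mu(K)+\mu(L)$ of the Minkowski sum expressed through $\mu$ — more precisely, using that $\int h(K+L)\,d\mu(K+L)$ splits additively over $d\mu(K)+d\mu(L)$ in the same formal way — and then invoking \eqref{rep2} to identify each resulting term $\frac{1}{|\alpha|}\int h(K+L)\,d\mu(K)$ and $\frac{1}{|\alpha|}\int h(K+L)\,d\mu(L)$ with a $\mu$-sum derivative of $F$. This should express $F(K+L)$ as a combination of $\frac{d}{dt}F(K+_\mu t\cdot_\mu(K+L))$ and $\frac{d}{dt}F(L+_\mu t\cdot_\mu(K+L))$ at $t=0^+$, up to the homogeneity factor $\tfrac{1}{\alpha-1}$ appearing in \eqref{rep2}.

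Next I would exploit the assumed $\mu$-concavity. In exact parallel with \eqref{effino}, I would introduce for fixed $C,C'\in\K_0^n$ the function
\begin{equation}\label{pl:g}
g(t):=F^{1-(1/\alpha)}(C+_\mu t\cdot_\mu C')-F^{1-(1/\alpha)}(C)-t\,F^{1-(1/\alpha)}(C'),
\end{equation}
which by \eqref{KS} is nonnegative for $t\ge 0$ and vanishes at $t=0$, whence $g'(0)\ge 0$. Computing $g'(0)$ via \eqref{rep2} yields, after rearrangement, a lower bound for the $\mu$-sum derivative of the form $\frac{d}{dt}F(C+_\mu t\cdot_\mu C')\big|_{0^+}\ge(\text{const})\,F^{1/\alpha}(C)F^{1-(1/\alpha)}(C')$. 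I would then apply this inequality twice, with $C=K$, $C'=K+L$ and with $C=L$, $C'=K+L$, substitute back into the expression for $F(K+L)$ obtained in the first step, factor out the common power of $F(K+L)$, and read off \eqref{BM}. The equality case would be handled exactly as before: if \eqref{BM} is an equality then $g$ is forced to have $g'(0)=0$ while being nonnegative and concave (the latter from \eqref{KS}), so it vanishes identically, forcing $C$ and $C+L$ (hence $K$ and $L$) to be homothetic.

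The main obstacle I anticipate is purely bookkeeping with the homogeneity exponents rather than any genuine analytic difficulty: the factor $\tfrac{1}{\alpha-1}$ in \eqref{rep2}, the exponent $1-(1/\alpha)$ in the $\mu$-concavity \eqref{KS} versus $1/\alpha$ in \eqref{BM}, and the scaling $t\cdot_\mu K=t^{1/(\alpha-1)}K$ all have to be tracked carefully so that the powers of $F$ recombine correctly when dividing out $F^{1/\alpha}(K+L)$ at the end. A secondary subtlety is justifying the differentiability needed to pass from nonnegativity of $g$ to $g'(0)\ge 0$ and from concavity of $g$ to its identical vanishing in the equality case; but since these mirror the corresponding steps in Theorem~\ref{abstract1}, where concavity of the relevant function followed from the Brunn--Minkowski inequality, here the concavity of $g$ should follow symmetrically from the assumed $\mu$-concavity, and the argument closes.
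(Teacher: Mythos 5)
Your overall strategy is the paper's, and everything from the introduction of $g$ in \eqref{pl:g} onward is essentially correct (modulo two harmless slips: the common factor to divide out at the end is $F^{1-(1/\alpha)}(K+L)$, not $F^{1/\alpha}(K+L)$, and the equality case forces $K$ and $K+L$, not ``$C$ and $C+L$'', to be homothetic). The genuine gap is in the opening decomposition. You justify it by ``the additivity $\mu(K+L)=\mu(K)+\mu(L)$ of the Minkowski sum expressed through $\mu$'' and accordingly split $F(K+L)=\frac{1}{|\alpha|}\int h(K+L)\,d\mu(K+L)$ into $\frac{1}{|\alpha|}\int h(K+L)\,d\mu(K)+\frac{1}{|\alpha|}\int h(K+L)\,d\mu(L)$. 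But $\mu$ is additive under the $\mu$-sum $+_\mu$ --- that is Definition \ref{crucial2} --- not under the Minkowski sum: in the model case where $\mu$ is the surface area measure, the identity $\mu(K+L)=\mu(K)+\mu(L)$ is precisely the statement that Minkowski and Blaschke addition coincide, which fails for $n\geq 3$. Moreover, the terms this would produce do not match the derivatives you then announce: by \eqref{rep2}, $\int h(K+L)\,d\mu(K)$ corresponds to $\frac{d}{dt}F\big((K+L)+_\mu t\cdot_\mu K\big)_{|t=0^+}$, not to $\frac{d}{dt}F\big(K+_\mu t\cdot_\mu(K+L)\big)_{|t=0^+}$.

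The ingredient you actually need --- and the one the paper uses --- is the Minkowski linearity of the support function, $h(K+L)=h(K)+h(L)$, with the measure $d\mu(K+L)$ left untouched. This gives
$$F(K+L)=\frac{1}{|\alpha|}\int h(K)\,d\mu(K+L)+\frac{1}{|\alpha|}\int h(L)\,d\mu(K+L)\ ,$$
and now \eqref{rep2}, applied with the pairs $(K,\,K+L)$ and $(L,\,K+L)$ in place of $(L,K)$, identifies the two terms with $\frac{\alpha-1}{\alpha}\frac{d}{dt}F\big(K+_\mu t\cdot_\mu(K+L)\big)_{|t=0^+}$ and $\frac{\alpha-1}{\alpha}\frac{d}{dt}F\big(L+_\mu t\cdot_\mu(K+L)\big)_{|t=0^+}$ --- exactly the expression you wanted. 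With this repair the rest of your argument coincides with the paper's: $g\geq 0$ and $g(0)=0$ give $g'(0)\geq 0$, hence $\frac{\alpha-1}{\alpha}\frac{d}{dt}F(C+_\mu t\cdot_\mu C')_{|t=0^+}\geq F^{1/\alpha}(C)\,F^{1-(1/\alpha)}(C')$; applying this with $C=K,L$ and $C'=K+L$, substituting back, and dividing by $F^{1-(1/\alpha)}(K+L)$ yields \eqref{BM}, and the equality discussion goes through as you describe.
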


\proof By using (\ref{effe}) and (\ref{rep2}), we have:
$$\begin{array}{ll} F (K + L) & = \displaystyle{\frac{1}{|\alpha|} \int h (K +  L) \, d \mu (K +  L)} \\
& = \displaystyle{\frac{1}{|\alpha|} \int h (K ) \, d \mu (K +L) + \frac{1}{|\alpha|} \int h (L) \, d \mu (K+L)} \\
& = \displaystyle{\frac{\alpha-1}{\alpha}\frac{d}{dt}
F(K+_\mu t \cdot_\mu(K + L)) _{| _{t= 0^+}} }+ \displaystyle{\frac{\alpha-1}{\alpha} \frac{d}{dt} F(L+_\mu t\cdot _\mu (K+  L)) _{| _{t= 0^+}}}
\end{array}
$$
Since  now $F$ satisfies (\ref{KS}), for all $C$ and $C'$ in $\K ^n$ the function
\begin{equation}\label{effinobis}
f(t):= F ^ {1-(1/\alpha)} (C+_\mu t\cdot _\mu C') - F ^ {1-(1 /\alpha)} (C) - t F ^ {1-(1/\alpha)} (C')
\end{equation}
is nonnegative for $t \geq 0$. Since $f(0) = 0$, this implies $f' (0) \geq 0$, and hence
$$\frac{\alpha -1}{\alpha}\frac{d}{dt} F(C+_\mu t\cdot _\mu C') _{| _{t= 0^+}}
\geq   F ^ {1/\alpha} (C)F^ {1-(1/\alpha)} (C')\ .$$
Applying this inequality once with $C= K$ and $C' = K +  L$, and once with $C= L$ and $C' = K +  L$, we infer
$$F (K +  L) \geq \big [ F^ {1/\alpha} (K) +  F^ {(1/\alpha)} (L) \big ] F ^ {1-(1/\alpha)} (K +  L) \ ,$$
and  (\ref{BM}) follows dividing by $F ^ {1-(1/\alpha)} (K +  L)$.

If (\ref{BM}) holds with equality sign, the function $f$ defined as in (\ref{effinobis}), with $C= K$ and $C' = K +  L$, satisfies $f' (0) = 0$. Since
by (\ref{KS}) $f$ is both nonnegative and concave, it vanishes identically. By assumption, this implies that
$K$ and $K + L$ are homothetic, and hence $K$ and $L$ are homothetic.   \qed

\bigskip
In view of applications of Theorem \ref{abstract1} to extremal problems, it is useful to give the following
\begin{definition}\rm
Let $\mu$ be a parametrization of $\K _0^n$.
By saying that $K \in \K_0^n$ is $\mu$-{\it indecomposable}, we mean that the equality
$K =  K' + _\mu K'' $ implies that $K '$ and $K ''$ are $\mu$-scalar multiples of $K$.
\end{definition}

When $\mu$ is the surface area measure, a theorem of Bronshtein \cite[Theorem 1]{Br}
characterizes simplexes as the unique Blaschke-indecomposable bodies.
The same proof allows to obtain the following more general statement. We denote by ${\cal A}$ the class of Alexandrov measures on $\mathbb{S} ^ {n-1}$ (defined as in Example \ref{ex}).

\begin{teo}\label{B}  Let $\mu$ be a parametrization of $\K _0^n$, with $\mu (\K_0 ^n) = {\cal A}$. Then a convex body
$K \in \K _0 ^n$ is $\mu$-indecomposable if and only if  ${\rm spt}(\mu (K))$ consists exactly of $(n+1)$ distinct points in $\mathbb{S} ^ {n-1}$.
\end{teo}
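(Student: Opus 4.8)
The statement is that $K$ is $\mu$-indecomposable if and only if $\mathrm{spt}(\mu(K))$ consists of exactly $n+1$ distinct points on $\mathbb{S}^{n-1}$. The key conceptual reduction, signalled by the hypothesis $\mu(\K_0^n)=\mathcal{A}$, is that the $\mu$-algebraic structure on $\K_0^n$ is \emph{isomorphic} to the additive cone structure of the Alexandrov measures $\mathcal{A}$. Indeed, by Definition \ref{crucial2} the map $\mu$ transports $+_\mu$ into ordinary addition of measures and $t\cdot_\mu$ into scalar multiplication; since $\mu$ is a bijection from $\K_0^n$ (modulo translation) onto $\mathcal{A}$, the body $K$ is $\mu$-indecomposable precisely when its measure $\nu:=\mu(K)$ is \emph{indecomposable inside the cone $\mathcal{A}$}, meaning that any decomposition $\nu=\nu_1+\nu_2$ with $\nu_1,\nu_2\in\mathcal{A}$ forces $\nu_1,\nu_2$ to be nonnegative scalar multiples of $\nu$. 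So the whole problem is recast as: a measure $\nu\in\mathcal{A}$ is indecomposable within $\mathcal{A}$ iff its support is $n+1$ distinct points. This removes all reference to the particular functional and reduces everything to measure theory on the sphere. The first thing I would do is state this reduction cleanly, since it is exactly the point that lets Bronshtein's argument for the surface-area case carry over verbatim.

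For the easy direction (sufficiency), suppose $\mathrm{spt}(\nu)=\{\nu_0,\dots,\nu_n\}$, i.e. $\nu=\sum_{i=0}^n c_i\delta_{\nu_i}$ with $c_i>0$. The constraint that $\nu\in\mathcal{A}$ has null barycenter, $\sum_i c_i\nu_i=0$, together with the condition that the $\nu_i$ are not contained in any equator (they positively span $\R^n$), pins down the weights $c_i$ up to a common factor. Concretely, if $\nu=\nu_1+\nu_2$ with both summands Alexandrov measures, each $\nu_j$ is supported in $\{\nu_0,\dots,\nu_n\}$, say $\nu_j=\sum_i a_i^{(j)}\delta_{\nu_i}$, and each must separately satisfy the barycenter condition $\sum_i a_i^{(j)}\nu_i=0$. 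Because $n+1$ vectors positively spanning $\R^n$ with zero barycenter admit a \emph{one-dimensional} space of such weight-vectors, each $\nu_j$ is forced to be proportional to $\nu$; hence $\nu$ is indecomposable. I would carry this out via the elementary linear-algebra fact that $n+1$ points in general position annihilated by a positive combination leave only a one-parameter family of annihilating combinations.

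The necessity direction is the substantive half and the main obstacle. Here I would invoke the structure of Bronshtein's proof rather than reinvent it, following the strategy the authors advertise (``the same proof allows to obtain the following more general statement''). The idea is the contrapositive: if $\mathrm{spt}(\nu)$ has more than $n+1$ points (or is not finite), one constructs a genuine nontrivial decomposition $\nu=\nu_1+\nu_2$ inside $\mathcal{A}$ with $\nu_1,\nu_2$ not proportional to $\nu$. The delicate part is ensuring the pieces remain \emph{Alexandrov} measures---that each $\nu_j$ still has null barycenter and is not concentrated on an equator---while splitting off a small perturbation supported on a proper, linearly-dependent subconfiguration of the support. When $\mathrm{spt}(\nu)$ contains at least $n+2$ points one can find a nonzero signed measure $\rho$ supported there with $\int\!\nu_i\,d\rho=0$ (zero barycenter) and $\rho$ not proportional to $\nu$; for sufficiently small $\varepsilon>0$ both $\nu\pm\varepsilon\rho$ are positive, lie in $\mathcal{A}$, and give $\nu=\tfrac12(\nu+\varepsilon\rho)+\tfrac12(\nu-\varepsilon\rho)$, a nontrivial decomposition. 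The genuinely technical point I expect to be hardest is handling an \emph{infinite} support (no finite set of atoms), where one must produce the perturbation $\rho$ as a measure rather than a finite atomic combination and check the equator condition survives; this is precisely where Bronshtein's original argument does the real work, and I would import it, verifying only that no property of the surface-area measure beyond membership in $\mathcal{A}$ was used.
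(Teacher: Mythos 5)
Your reduction to indecomposability of the measure $\nu=\mu(K)$ inside the cone ${\cal A}$, and your linear-algebra argument for sufficiency ($n+1$ support points spanning $\R^n$ leave a one-dimensional kernel of annihilating weights), are exactly what the paper does. The one place where your proposal stops short of a proof is the necessity direction, and the step you wave at is precisely the one your sketch, as written, gets wrong: for a general nonzero signed measure $\rho$ supported on ${\rm spt}(\nu)$ with null barycenter, it is \emph{not} true that $\nu\pm\varepsilon\rho\geq 0$ for small $\varepsilon$ (take $\rho$ with an atom, or any singular part, where $\nu$ has none). Positivity requires $\rho$ to have a bounded density with respect to $\nu$, and the paper's construction is designed to produce exactly that: pick $n+2$ distinct points $\xi_1,\dots,\xi_{n+2}$ in ${\rm spt}(\nu)$, pairwise disjoint neighbourhoods $\omega_i$, and perturb the piecewise-constant density by setting $\nu'=\sum_i\gamma_i'\,\nu\res\omega_i+\tfrac12\,\nu\res C$. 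The null-barycenter condition is $n$ linear equations in the $n+2$ unknowns $\gamma_i'$, solvable at $\gamma_i'=1/2$, hence along a two-dimensional affine family; one then chooses two positive, non-constant solutions summing to $1$. This handles atomic and diffuse supports uniformly, makes $\nu',\nu''\geq 0$ automatic, and preserves the support (hence the not-on-an-equator condition), which is the point you flagged as delicate. So your plan is correct in outline and coincides with the paper's route, but the decomposition itself needs this density-perturbation form rather than an abstract $\rho$; you should also record the one-line observation that a measure in ${\cal A}$ cannot have fewer than $n+1$ support points (null barycenter would force them into a hyperplane through the origin), which is needed to conclude that an indecomposable body has \emph{exactly} $n+1$.
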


\begin{remark} {\rm For any of the parametrizations in Example \ref{ex}, since
${\rm spt}(\mu (K))$ coincides with the image of the Gauss map
$\nu _K$, Theorem \ref{B} tells that the unique $\mu$-indecomposable bodies are simplexes, exactly as it happens for the Blaschke sum.}
\end{remark}

\medskip
\proof\hskip -.1cm[of Theorem \ref{B}]
(i) Let us show that, if ${\rm spt}(\mu (K))$ contains  $(n+2)$ distinct points $\{\xi _1, \dots, \xi _{n+2}\}$
in $\mathbb{S} ^ {n-1}$, then $K$ is not $\mu$-indecomposable.

For every $i  \in \{1, \dots, n+2\}$, let $\omega_i$ be pairwise disjoint neighbourhoods of $\xi _i$ in $\mathbb{S} ^ {n-1}$, and let $C$ be the complement
of their union in $\mathbb{S} ^ {n-1}$. Since $\mu (K)$ has null baricenter, we have
\begin{equation}\label{nb}
\sum_{i = 1} ^ {n+2} \int_{\omega_i} x \, d \mu (K)  + \int_{C} x \, d \mu (K)  = 0\ .
\end{equation}

For generic positive coefficients $\gamma_i$ (to be chosen later), consider the measure
\begin{equation}\label{nu}
\nu := \sum _{i=1} ^ {n+2} \gamma _i \mu \res \omega _i + \frac{1}{2} \mu \res C\ .
\end{equation}

It is clear that, for any choice of $\gamma _i>0$, $\nu$ is not concentrated on any equator of $\mathbb{S} ^ {n-1}$.
If we impose that $\nu$ has null barycenter, we obtain the following system of $n$ equations in the $n+2$ unknown
$(\gamma_1, \dots, \gamma _{n+2} )$:

$$\sum_{i = 1} ^ {n+2} \gamma _i \int_{\omega_i} x \, d \mu (K)  + \frac{1}{2}\int_{C} x \, d \mu (K)  = 0\ .$$

Since by (\ref{nb}) we know that a solution exists (by taking $\gamma_i = 1/2$ for all $i$),
we infer that there exists a $2$-dimensional linear subspace $V$ of $\R^ {n+2}$ such that any $\gamma \in (1/2, \dots, 1/2) + V$ is a solution.
Therefore, we may choose solutions $(\gamma'_1, \dots, \gamma' _{n+2})$ and $(\gamma''_1, \dots, \gamma'' _{n+2})$ such that
$\gamma'_i$ are not all equal, $\gamma'_i>0$, $\gamma'' _i >0$, and $\gamma'_i + \gamma '' _i = 1$.

We set $\nu'$ and $\nu''$ the measures defined as in (\ref{nu}), with $\gamma _i$ equal to $\gamma' _i$ and $\gamma ''_i$ respectively.
Since $\nu'$ and $\nu ''$ belong to ${\cal A}$, and by assumption $\mu$ is a parametrization of $\K _0^n$ with $\mu (\K _0^n) = {\cal A}$,
there exist $K'$ and $K''$ in $\K _0 ^n$ such that $\mu (K' ) = \nu'$ and $\mu (K'') = \nu ''$. By construction, we have
 $\mu (K) = \mu (K') + \mu (K'')$, and $\mu(K')$ is not a multiple of $\mu (K)$. Then $K$ is not $\mu$-indecomposable.

On the other hand, if
${\rm spt}(\mu (K))$ contains strictly less than $n+1$ points in $\mathbb{S} ^ {n-1}$, then $\mu(K)$ cannot belong to ${\cal A}$. Indeed, the null barycenter condition implies that these points are linearly dependent, hence $\mu$ is concentrated on the intersection
of some hyperplane with $\mathbb{S} ^{n-1}$.

Therefore we have proved that, if $K$ is $\mu$-indecomposable, necessarily ${\rm spt}(\mu (K))$ is made exactly by $(n+1)$ distinct points in $\mathbb{S} ^ {n-1}$.

\bigskip

(ii) Viceversa, assume that ${\rm spt}(\mu (K)) =\{\xi _1, \dots, \xi _{n+1}\}$, and let us show that $K$ is $\mu$-indecomposable.
Assume that $K = K ' + _\mu K ''$.
The equality $\mu (K) = \mu (K') + \mu (K'')$ implies that ${\rm spt}(\mu (K'))$ and ${\rm spt}(\mu (K''))$ are contained into
$\{\xi _1, \dots, \xi _{n+1}\}$. For $i = 1, \dots, n+1$, let $\sigma_i$ and $\lambda _i$ be positive numbers such that
$$\mu (K) [\xi _i] = \sigma _i \qquad \hbox{ and } \qquad \mu (K') [\xi _i] = \lambda _i \, \sigma _i \ .$$
Since both $\mu (K)$ and $\mu (K')$ have null barycenter, we have
$$\lambda _1 \sum _{i=1} ^ {n+1} \sigma _i \xi _i = 0 \qquad \hbox{ and } \qquad \sum _{i=1} ^ {n+1} \lambda _i \sigma _i \xi _i = 0\ .$$
By subtraction, we obtain
$$\sum _{i = 2} ^ {n+1} (\lambda _i - \lambda _1) \sigma _i \xi _i = 0\ .$$
If $\lambda _i \neq \lambda _1$ for some $i \in \{2, \dots, n+1\}$, then $\{\xi _2, \dots \xi _{n+2} \}$ would be linearly dependent,
so that $\mu (K)$ would be concentrated on the intersection of $\mathbb{S} ^ {n-1}$ with an hyperplane through the origin,
against the assumption that $\mu(K) \in {\cal A}$.
Therefore it must be $\lambda _i = \lambda _1$ for all
$i \in \{2, \dots, n+1\}$, which means that $K'$ equals $\lambda _1 \cdot _\mu K$.  \qed

\subsection{Isoperimetric-like problems}\label{3b}

We now focus attention on the isoperimetric-like problems of maximizing or minimizing the
dilation invariant quotient
$$\E(K):= \frac{F^ {1- (1/\alpha)} ( K )} {\int d \mu ( K )}  \ ,
$$
being $F$ a $\alpha$-homogeneous functional which can be written under the integral form
$(\ref{effe})$.

\begin{teo}\label{vp} Let $F : \K_0 ^ n \to \R^+$ be given by $(\ref{effe})$ for some
$(\alpha -1)$-homogeneous map $\mu:  \K _0^n \to \M ^+ (\mathbb{S} ^ {n-1})$. Then:
\begin{itemize}
\item[--]
if $F$ satisfies the Brunn-Minkowski inequality
 $(\ref{BM})$, then
the maximum of $\E$ over $\K_0 ^n$ is attained on $\B ^n$; 
\item[--]  if $\mu$ is a parametrization of $\K _0^n$ and $F$ satisfies
 $(\ref{KS})$, with strict inequality for non-homothetic sets, then $\E$ can attain its minimum over $\K _0^n$ only at a $\mu$-indecomposable set.
 \end{itemize}
\end{teo}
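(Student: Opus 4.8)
The plan is to treat the two bullets by the same mechanism: the denominator $\int d\mu(K)$ is additive under the $\mu$-sum and homogeneous of degree one under $\mu$-dilation, while the numerator $F^{1-(1/\alpha)}$ is controlled from below under Minkowski sums by $(\ref{BM})$ and under $\mu$-sums by $(\ref{KS})$. First I would record that $\E$ is scale-free: from $\alpha$-homogeneity of $F$ one gets $F^{1-(1/\alpha)}(tK)=t^{\alpha-1}F^{1-(1/\alpha)}(K)$ and $\int d\mu(tK)=t^{\alpha-1}\int d\mu(K)$, while $\mu(t\cdot_\mu K)=t\,\mu(K)$ gives $F^{1-(1/\alpha)}(t\cdot_\mu K)=t\,F^{1-(1/\alpha)}(K)$ and $\int d\mu(t\cdot_\mu K)=t\int d\mu(K)$; in both cases the homogeneities cancel, so the two extremality statements are genuinely meaningful.

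For the maximum I would reduce the claim $\E(K)\le\E(B)$, with $B$ the unit ball, to a Minkowski first inequality. Since $F$ is $\alpha$-homogeneous and satisfies $(\ref{BM})$, the map $g(t):=F^{1/\alpha}(K+tB)$ is concave and dominates its chord, $g(t)\ge g(0)+tF^{1/\alpha}(B)$; hence $g'(0^+)\ge F^{1/\alpha}(B)$, which unwinds to
$$\frac{1}{\alpha}\,F^{(1/\alpha)-1}(K)\,\frac{d}{dt}F(K+tB)\Big|_{t=0^+}\ \ge\ F^{1/\alpha}(B).$$
The decisive step is then to identify the denominator: evaluating the first-variation representation $(\ref{rep1})$ at $L=B$ and using $h(B)\equiv1$ gives $\frac{d}{dt}F(K+tB)|_{t=0^+}=\frac{\alpha}{|\alpha|}\int d\mu(K)$, so the previous line becomes $\int d\mu(K)\ge|\alpha|\,F^{1-(1/\alpha)}(K)\,F^{1/\alpha}(B)$. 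Dividing yields $\E(K)\le 1/(|\alpha|F^{1/\alpha}(B))$, and since $(\ref{effe})$ gives $\int d\mu(B)=|\alpha|F(B)$ one checks $\E(B)=1/(|\alpha|F^{1/\alpha}(B))$, so the bound is attained at $B$. I expect this identification of $\int d\mu(K)$ with the first variation of $F$ along the ball to be the main obstacle: it is the one place where more than the bare representation $(\ref{effe})$ is used, and it is exactly what ties the abstract measure $\mu$ back to the energy $F$.

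For the minimum I would show that a $\mu$-decomposable body cannot be optimal. Suppose $K=K'+_\mu K''$ with $K',K''\in\K_0^n$, neither a $\mu$-scalar multiple of $K$. First I would note that $K'$ and $K''$ must be non-homothetic: if $K''$ were a homothet of $K'$, then translation-invariance and $(\alpha-1)$-homogeneity of $\mu$ would give $\mu(K'')=\lambda^{\alpha-1}\mu(K')$, whence $\mu(K)=(1+\lambda^{\alpha-1})\mu(K')$ and $K'=c\cdot_\mu K$, a contradiction. Thus the strict form of $(\ref{KS})$ applies, $F^{1-(1/\alpha)}(K)>F^{1-(1/\alpha)}(K')+F^{1-(1/\alpha)}(K'')$, while the denominator is exactly additive. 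The mediant inequality $\frac{a+c}{b+d}\ge\min\{a/b,c/d\}$ for positive $b,d$ then gives
$$\E(K)\ >\ \frac{F^{1-(1/\alpha)}(K')+F^{1-(1/\alpha)}(K'')}{\int d\mu(K')+\int d\mu(K'')}\ \ge\ \min\{\E(K'),\E(K'')\},$$
so $K$ is not a minimizer, and every minimizer over $\K_0^n$ must be $\mu$-indecomposable. This part runs exactly parallel to the minimum statement of Theorem \ref{BMteo}; the only delicate point is the homothety bookkeeping that licenses the strict inequality.
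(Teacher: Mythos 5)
Your proposal is correct and follows essentially the same route as the paper's: the first bullet is the paper's argument with the ball normalized to have unit support function rather than to match $\int d\mu(K)$, including the same appeal to the first-variation identity $(\ref{rep1})$ with $L$ a ball (which the paper also uses here even though it is not listed among the hypotheses of the theorem), and the second bullet is the same mediant-inequality argument imported from the proof of Theorem \ref{BMteo}. Your explicit verification that a nontrivial $\mu$-decomposition forces the two summands to be non-homothetic (so that the strict form of $(\ref{KS})$ applies) is a detail the paper leaves implicit, but it is not a different method.
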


\proof
Assume that $F$ satisfies (\ref{BM}).
For a fixed $K \in \K_0 ^n$, let us denote by $B$ the ball in $\K _0^n$ with $\int d \mu (B) = \int d \mu (K)$.
We use once more the argument that the function
$$f(t):= F ^ {1/\alpha} (K+ tB) - F ^ {1 /\alpha} (K) - t F ^ {1/\alpha} (B)
$$
is nonnegative for $t \geq 0$ to deduce that  $f' (0) \geq 0$, which gives
$$ \frac{1}{|\alpha|}\,  h (B) \int  d \mu (K)= \frac{1}{\alpha}\frac{d}{dt} F(K+ t B) _{| _{t= 0^+}}
\geq  F^ {1-(1/\alpha)} (K) F ^ {1/\alpha} (B)\ .$$
Since $|\alpha| = (F(B)) ^ {-1} \, h (B) \int d \mu (B)$, the above inequality yields
$F (B ) \geq   F(K )$.

Assume now that $\mu$ is a parametrization of $\K_0 ^n$ and  that $F$ satisfies (\ref{KS}), with strict inequality for non-homothetic sets. If $K$ can be decomposed as $K = K' + _\mu K''$, with $K'$ and $K''$ non-homotetic, then by arguing as in the proof of Theorem \ref{BMteo} one obtains that $K$ cannot be a minimizer for $\E$.
\qed

\bigskip
As a consequence of Theorem \ref{abstract1}, Theorem \ref{B}, and Theorem \ref{vp}, we deduce

\begin{cor} For any of the functionals in Example \ref{ex},
the unique maximizers of $\E$ over $\K _0^n$ are balls, and
any minimizer of $\E$ over $\K_0 ^n$ (if it exists) is a simplex.
\end{cor}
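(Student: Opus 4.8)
The plan is to assemble the corollary directly from the three cited results, which together give both halves of the statement for each functional in Example~\ref{ex}. First I would verify that every functional listed there---volume, $2$-capacity, first Dirichlet eigenvalue, and torsional rigidity---satisfies the hypotheses of Theorem~\ref{abstract1}. Each is $\alpha$-homogeneous with the exponent recorded in Example~\ref{ex}, each admits the integral representation $(\ref{effe})$ through its Alexandrov-type measure $\mu$, and each is known to satisfy the Brunn--Minkowski inequality $(\ref{BM})$; the comprehensive reference for the latter is \cite{Co}. Crucially, for each of these functionals the Brunn--Minkowski inequality holds with equality \emph{only} for homothetic sets, and the first-variation formula $(\ref{rep1})$ is exactly the defining property of the associated measure $\mu$. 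Theorem~\ref{abstract1} then yields that $F$ is $\mu$-concave, i.e.\ $(\ref{KS})$ holds, and moreover that the strict inequality holds in $(\ref{KS})$ for non-homothetic sets.

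Having established strict $\mu$-concavity, I would then invoke Theorem~\ref{vp}. Since each $\mu$ here is a genuine parametrization of $\K_0^n$ (its image is the cone $\mathcal{A}$ of Alexandrov measures, by the Minkowski--Alexandrov, Jerison, and Colesanti--Fimiani existence theorems recalled in Example~\ref{ex}), both bullets of Theorem~\ref{vp} apply. The first bullet, needing only $(\ref{BM})$, gives that the maximum of $\E$ over $\K_0^n$ is attained on $\B^n$; the uniqueness of the ball as maximizer follows because the strict form of Brunn--Minkowski forces equality only at homothets. The second bullet, using the now-verified strict $(\ref{KS})$, gives that any minimizer of $\E$ must be $\mu$-indecomposable.

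Finally I would translate the abstract indecomposability conclusion into the geometric statement about simplexes using Theorem~\ref{B}. For any parametrization with $\mu(\K_0^n)=\mathcal{A}$, that theorem characterizes the $\mu$-indecomposable bodies as those $K$ for which $\mathrm{spt}(\mu(K))$ consists of exactly $n+1$ points of $\mathbb{S}^{n-1}$. For all the functionals in Example~\ref{ex}, the support of $\mu(K)$ coincides with the image of the Gauss map $\nu_K$ (the measures in $(\ref{prima})$ are mutually absolutely continuous with the surface area measure on the regular part of $\partial K$), so a support consisting of $n+1$ points means precisely that $K$ has $n+1$ facets, i.e.\ $K$ is a simplex. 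Hence any minimizer, if one exists, is a simplex, completing the proof.

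The main obstacle I anticipate is not in the logical chaining---which is essentially bookkeeping across Theorems~\ref{abstract1},~\ref{vp}, and~\ref{B}---but in justifying, for each individual functional, the two delicate analytic inputs: that Brunn--Minkowski holds with equality only for homothets (so that the strict form of $(\ref{KS})$ is available), and that $\mathrm{spt}(\mu(K))$ genuinely equals the Gauss image so that ``$n+1$ support points'' really does mean ``simplex.'' Both are standard for volume but require citing the respective capacitary and eigenvalue results; I would lean on \cite{Co} and the equality-case analyses in \cite{CaJeLi,J1,J2,CoFi} to close these points rather than reprove them here.
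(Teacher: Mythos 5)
Your proposal is correct and follows essentially the same route as the paper, which itself only states that the corollary follows by chaining Theorem \ref{abstract1} (to get strict $\mu$-concavity from the strict Brunn--Minkowski inequality and the first-variation formula), Theorem \ref{vp} (maximality of balls and $\mu$-indecomposability of minimizers), and Theorem \ref{B} together with the observation that ${\rm spt}(\mu(K))$ is the Gauss image, so that $\mu$-indecomposable bodies are simplexes. Your extra care about the uniqueness of the ball as maximizer (which requires running the equality case through the strict form of \eqref{BM}, since the first bullet of Theorem \ref{vp} as stated only gives attainment) is a point the paper leaves implicit, and is handled correctly.
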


Let us examine more in detail each case.

\bigskip
\noindent $\bullet$ \ {\bf Volume.}
 When $F(K) = {\rm Vol} (K)$, the inequality $\E (K)
\leq \E (B)$ corresponds to the classical  isoperimetric inequality
for convex bodies, whereas clearly $\inf _{\K_0 ^n} \E = 0$.

\medskip

\noindent $\bullet$ \ {\bf Capacity.} When $F(K) = {\rm Cap} (K)$ ($n\geq 3$), the inequality $\E (K)
\leq \E (B)$ corresponds to
\begin{equation}\label{known}
{\rm Cap} (K) ^{(n-3)/(n-2)} \leq c_n \int _{\partial K} |\nabla u _K |^ 2 \, d {\cal H } ^{n-1} \qquad \forall K \in \K_0 ^n\ ,
\end{equation}
where $u _K$ is the electrostatic potential of $K$ and the dimensional constant $c_n$ is chosen so that
equality holds when $K$ is a ball. Inequality (\ref{known}) was already known: it has been proved by Jerison in
\cite[Corollary 3.19]{J1}.
On the other hand, Lemma 4.13 in the same paper implies that $\E (K _h)$  is infinitesimal for any
sequence $\{K _h\}$ of convex bodies which converges to a convex set $K_0$ contained into a $(n-1)$-hyperplane.
Therefore, we have again $\inf _{\K _0^n} \E = 0$.

\medskip
\noindent $\bullet$ \ {\bf Torsional rigidity.} When $F(K) = \tau (K)$, the inequality $\E (K)
\leq \E (B)$ corresponds to
\begin{equation}\label{unknown}
\tau (K) ^{(n+1)/(n+2)} \leq c_n \int _{\partial K} |\nabla u _K |^ 2 \, d {\cal H } ^{n-1} \qquad \forall K \in \K_0 ^n\ ,
\end{equation}
where now $u _K$ is the solution to the equation $- \Delta u = 1$ on $K$ with
$u= 0$ on $\partial K$,  and  $c_n$ is chosen so that
equality holds when $K$ is a ball.

To the best of our knowledge, the isoperimetric-like inequality (\ref{unknown}) is new. In view of the first variation formula (\ref{rep1}), it can be rephrased as follows: {\it among convex domains with prescribed torsional rigidity,
the ball is the most stable when perturbed by Minkowski addition of a ball.}

\bigskip
On the other hand, we claim that the infimum is again zero.
Indeed (for $n=2$), let $R_l$ denote the rectangle $[0, l] \times [0,1]$. Then the unique solution $u _l$ in $H ^ 1_0 (R_l)$ to the equation
$- \Delta u = 1$ in $R_l$ can be explicitly determined as (see for instance \cite{CrGa})
$$
u _l (x,y) = \frac{lx - x^2}{2} - \frac{4 l ^2}{ \pi ^3} \sum _{k = 0} ^ \infty \frac{\sin [(2k+1) \pi x]}{(2k +1) ^ 3 ( e ^ {(2k+1) \pi/l}+1)}
\{e ^ {(2k+1) \pi y/l} +  e ^ {(2k+1) \pi (1-y)/l} \}\ .$$
From this formula one can easily check that, as $l \to 0$,
$$\tau (R_l) = O ( l ^3) \qquad \hbox{ and } \qquad \int _{\partial R_l} |\nabla u _l| ^ 2 \, d {\cal H} ^1= O ( l^2)\ ,$$
so that
$$\lim _{l \to 0} \frac{\tau (R_l) ^ {3/4}} {\ds \int _{\partial R_l} |\nabla u _l| ^ 2 \, d {\cal H} ^1} = 0\ .$$

\medskip
\noindent $\bullet$\ {\bf The first Dirichlet eigenvalue.} When $F(K) = \lambda _1 (K)$, the inequality $\E (K)
\leq \E (B)$ reads
$$\lambda _1 (K) ^{3/2} \leq c_n \int _{\partial K} |\nabla u _K |^ 2 \, d {\cal H } ^{n-1} \qquad \forall K \in \K_0 ^n\ ,$$
where $u _K$ is the first Dirichlet eigenfunction of $K$ and the dimensional constant $c_n$ is chosen so that
equality holds when $K$ is a ball.

Also this result
seems to be  new and means: {\it among convex domains with prescribed first Dirichlet Laplacian eigenvalue,
the ball is the most stable when perturbed by Minkowski addition of a ball.}

\medskip
In this case the analysis of thinning rectangles leads to guess that the infimum
\begin{equation}\label{open}
\inf _{K \in \K _0 ^2} \frac{\lambda _1 (K) ^ {3/2 }} {\ds \int _{\partial K} |\nabla u _K| ^ 2 \, d {\cal H} ^1}
\end{equation}
remains strictly positive. Indeed
let $R_l$ denote as above the rectangle $[0, l] \times [0,1]$ in $\R^2$, and let now $u _l$ be the first Dirichlet eigenfunction of the Laplacian on  $R_l$.
Then
$$u _l (x,y) = \frac{2}{\sqrt l} \sin \big ( \frac{ \pi x}{l} \big ) \sin ( \pi y  )
\qquad \hbox{ and } \qquad \lambda _1 (R_l) = \pi ^2 \Big ( \frac{1}{l^2} +1\Big ) \ .$$
By direct computation one gets
$$\int _{\partial R_l} |\nabla u _l| ^ 2 \, d {\cal H} ^1= 4 \pi
\Big ( \frac{1}{l^3}+1 \Big )
\ ,$$
so that
$$\lim _{l \to 0} \frac{\lambda _1 (R_l) ^ {3/2 }} {\ds \int _{\partial R_l} |\nabla u _l| ^ 2 \, d {\cal H} ^1} = \frac{ \pi ^2}{4}
\sim 2.46\ .$$

\bigskip
The comparison with another special case shows that the above sequence of rectangles is far from being a minimizing sequence.
Indeed, let $T$ denote the equilateral triangle with vertices $(0,0)$, $(1,0)$, and $(1/2, \sqrt 3 /2)$.
Then (see for instance \cite{Fr})
$$u _T (x,y) = \sin \Big ( \frac{4 \pi y} {\sqrt 3} \Big ) - \sin  \Big (2 \pi \big(x+  \frac{y} {\sqrt 3}\big) \Big ) +
\sin  \Big (2 \pi \big(x-  \frac{y} {\sqrt 3} \big) \Big )\qquad \hbox{ and } \qquad \lambda _1 (T) = \frac{16 \pi ^2}{3}\ .$$
By direct computation one gets
$$\int _{\partial T} |\nabla u _T| ^ 2 \, d {\cal H} ^1= \frac{9}{2} \frac{16 \pi ^2} {3}\ ,$$
so that
$$\frac{\lambda _1 (T) ^ {3/2 }} {\ds \int _{\partial T} |\nabla u _T| ^ 2 \, d {\cal H} ^1} = \frac{4 \pi}{\sqrt 3} \cdot \frac{2}{9} \sim 1.61\ .$$

\section{Local analysis}\label{4}

In this section, we develop local concavity arguments in order to obtain qualitative properties of the optimal sets minimizing (or maximizing)  functionals over $\K _0^n$.
Since the Minkowski addition is not useful for performing {\it local} perturbations of shapes, we use the general framework of deformations by vector fields associated to the classical shape derivative tools, see \cite{HePi} and references therein.
The idea was introduced in \cite{lano10,lanopi10}, where the authors study the convexity constraint for planar shapes: they point out a  {\it local} concavity behavior of the shape functionals, expressed via the second order shape derivative, which implies that optimal convex shapes are polygons. Roughly speaking, this tool allows to deal with the indecomposability concept in a local sense, which naturally leads to identify polygons with extremal sets.

\medskip
Below, we extend  this strategy in any dimension of the space and prove that under a suitable local concavity assumption for the shape functional, optimal convex sets need to be extremal in the sense that their Gauss curvature cannot be positive.
We finally show that this strategy can be applied to a large class of isoperimetric problems, including the conjecture of P\'olya and Sz\"ego.

\subsection{Local concavity of the shape functional}

For the convenience of the reader, we briefly remind the notion of first and second order shape derivatives, focusing on local $\C^2$ deformations, since they are  the ones we use for our purpose. Let $\mathcal{O}$ be a collection of sets in $\R^n$ and $J:\mathcal{O}\to\R$ be a shape functional. If $U$ is a compact set in $\R^n$ and $\varepsilon\in (0,1)$, we denote $\Theta_{U,\varepsilon}=\{\theta\in \C^2(\R^n,\R^n)\;/\;\mbox{Supp}(\theta)\subset U \; and \; \|\theta\|_{2,\infty}<\varepsilon\}$ endowed with the $W^{2,\infty}$-norm.

\begin{definition}\label{def:der}\rm
Let   $K \in\mathcal{O}$, and $U$ a compact set in $\R^n$.
It is said  that $J$ is shape differentiable at $K$ (in $U$) if there exists $\varepsilon>0$ such that $K_{\theta}:=(Id+\theta)(K)\in \mathcal{O}$ for all $\theta\in \Theta_{U,\varepsilon}$ and 
$\J_{K} : \theta\in \Theta_{U,\varepsilon} \mapsto J(K_{\theta})\in\R$
is differentiable at 0.\\
Similarly, $J$ is twice differentiable at $K$ (in $U$) if there exists $\varepsilon>0$ such that $\J_K$ is differentiable in $\Theta_{U,\varepsilon}$, and if $\J_{K}': \Theta_{U,\varepsilon}\to\Theta_{U,\varepsilon}'$ is differentiable at 0.
\end{definition}

The following result refers to the structure of second order shape derivatives (see  \cite{HePi}), and underlines the fact that the normal component of the deformation field plays a fundamental role  in the computation of the shape derivatives.

\begin{prop}
Let $K\in\mathcal{O}$ and $\omega$ a relatively open region of class $\C^3$ contained in $\partial K$. Let $U\subset\R^n$ be a compact set such that $U\cap\partial K\subset\omega$.
If $J$ is twice differentiable at $K$ (in $U$), then there exists a continuous bilinear form $l_{2}^{J}(K):\C^{1}(\omega)\times \C^{1}(\omega)\to\R$ such that,
$$\forall \theta \in \Theta_{U}\textrm{ normal in }\omega,
\quad\J''_K(0)\cdot(\theta,\theta)=l_{2}^{J}(K)\cdot(\varphi,\varphi),$$
where $\varphi:=\theta\cdot\nu_{K}$ is the restriction to $\omega$ of the normal component of $\theta$ (and $\Theta_{U}=\{\theta\in \C^2(\R^n,\R^n)\;/\;{\rm Supp}(\theta)\subset U\}$.
\end{prop}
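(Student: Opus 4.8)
The plan is to show that, among deformations that are normal in $\omega$, the value $\J''_K(0)\cdot(\theta,\theta)$ is insensitive to the behaviour of $\theta$ away from $\partial K$, so that it factors through the boundary trace $\varphi=\theta\cdot\nu_K$. The only geometric input needed is the elementary fact that a small vector field vanishing on $\partial K$ does not move $K$: if $\zeta\in\Theta_U$ satisfies $\zeta=0$ on $\partial K$, then for small $t$ the diffeomorphism $Id+t\zeta$ fixes $\partial K$ pointwise, whence $(Id+t\zeta)(K)=K$. This is the geometric content underlying the Hadamard structure theorem for the first shape derivative recalled in \cite{HePi}.

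First I would fix two fields $\theta_0,\theta_1\in\Theta_U$ that are normal in $\omega$ and share the same normal component there, say $\theta_0=\theta_1=\varphi\,\nu_K$ on $\omega$. Since ${\rm Supp}(\theta_i)\subset U$ and $U\cap\partial K\subset\omega$, the difference $\zeta:=\theta_1-\theta_0$ vanishes on all of $\partial K$. Expanding $g(s):=\J''_K(0)\cdot(\theta_0+s\zeta,\theta_0+s\zeta)$ by bilinearity and by symmetry of the second Fréchet derivative, the target identity $\J''_K(0)\cdot(\theta_1,\theta_1)=\J''_K(0)\cdot(\theta_0,\theta_0)$ reduces to proving $\J''_K(0)\cdot(\zeta,\zeta)=0$ together with the mixed relation $\J''_K(0)\cdot(\theta_0,\zeta)=0$.

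The first identity is immediate from the geometric fact above: $(Id+t\zeta)(K)=K$ gives $\J_K(t\zeta)\equiv J(K)$, so all its $t$-derivatives, in particular $\J''_K(0)\cdot(\zeta,\zeta)$, vanish. For the mixed relation I would set $F(t,s):=J\big((Id+t\theta_0+s\zeta)(K)\big)=\J_K(t\theta_0+s\zeta)$, so that $\J''_K(0)\cdot(\theta_0,\zeta)=\partial_t\partial_s F(0,0)$. For every $x\in\partial K$ one has $(Id+t\theta_0+s\zeta)(x)=(Id+t\theta_0)(x)$ because $\zeta(x)=0$; hence the image boundary $(Id+t\theta_0+s\zeta)(\partial K)$ does not depend on $s$, and therefore neither does the set $(Id+t\theta_0+s\zeta)(K)$. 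Thus $F(t,\cdot)$ is constant, $\partial_s F(t,0)=0$ for all small $t$, and differentiating in $t$ yields $\partial_t\partial_s F(0,0)=0$. These two identities prove that $\J''_K(0)\cdot(\theta,\theta)$ depends only on $\theta|_\omega=\varphi\,\nu_K$; I would then define $l_2^J(K)\cdot(\varphi,\psi):=\J''_K(0)\cdot(\theta_\varphi,\theta_\psi)$ for arbitrary normal extensions $\theta_\varphi,\theta_\psi$ of $\varphi\,\nu_K,\psi\,\nu_K$, the value being independent of the extensions by the argument just given, with bilinearity and symmetry inherited from $\J''_K(0)$.

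The remaining point, which I expect to be the main obstacle, is the continuity of $l_2^J(K)$ on $\C^1(\omega)\times\C^1(\omega)$. The standing hypothesis only furnishes continuity of $\J''_K(0)$ in the $W^{2,\infty}$-norm, whereas the assertion demands continuity in the weaker $\C^1$-trace norm, so a direct extension argument loses a derivative and is insufficient. To close this gap I would appeal to the finer second-order structure theorem of \cite{HePi}, which yields an explicit boundary representation of $l_2^J(K)$ as a quadratic expression in $\varphi$ and its tangential gradient, with coefficients built from the curvatures of $\omega$ and the first-order shape gradient; the requested $\C^1$-continuity can then be read directly off this representation, the hypothesis that $\omega$ be of class $\C^3$ guaranteeing the regularity of the coefficients.
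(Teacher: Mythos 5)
Your factorization argument is correct and is essentially the standard route to the well-definedness of $l_2^J(K)$: for $\zeta\in\Theta_U$ vanishing on $\partial K$, the map $Id+t\zeta$ is, for small $t$, a diffeomorphism of $\R^n$ fixing $\partial K$ pointwise, hence $(Id+t\zeta)(K)=K$; the same observation applied to $s\mapsto Id+t\theta_0+s\zeta$ shows that your $F(t,\cdot)$ is constant, and together with the symmetry of $\J''_K(0)$ (which does hold, since $\J_K$ is differentiable on a neighbourhood of $0$ and $\J'_K$ is differentiable at $0$) this proves that the quadratic form $\theta\mapsto\J''_K(0)\cdot(\theta,\theta)$, restricted to fields normal in $\omega$, depends only on $\varphi=\theta\cdot\nu_K$; polarization then yields a well-defined symmetric bilinear form. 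Note that the paper gives no proof of this proposition at all --- it is quoted from \cite{HePi} --- so on this portion you have actually reconstructed the relevant piece of the argument rather than diverged from the paper.

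The genuine gap is exactly where you locate it, and your proposal does not close it: the continuity of $l_2^J(K)$ on $\C^{1}(\omega)\times\C^{1}(\omega)$. Your construction only gives $|l_2^J(K)\cdot(\varphi,\psi)|\leq C\,\|\theta_\varphi\|_{2,\infty}\|\theta_\psi\|_{2,\infty}\leq C'\|\varphi\|_{\C^2}\|\psi\|_{\C^2}$, i.e.\ continuity for the $\C^2$ norm, and the gain of one derivative is the entire nontrivial content of the statement (it is what makes hypotheses such as \eqref{cont1} and the $H^{1/2}$ bounds of Lemma \ref{lem:derivative} meaningful). The fix you sketch --- reading the $\C^1$-continuity off an explicit boundary representation of $l_2^J(K)$ as a quadratic expression in $\varphi$ and $\nabla_\tau\varphi$ with curvature coefficients --- is not available for a general twice shape-differentiable $J$: such explicit formulas exist only functional by functional (this is precisely what Lemma \ref{lem:derivative} establishes for ${\rm Vol}$, $S$, $\tau$, $\lambda_1$, $\Cap$), whereas the proposition is an abstract structure theorem, so invoking it here would be circular. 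The proof in \cite{HePi} gains the extra derivative by a different mechanism, essentially comparing the perturbation $Id+t\theta+s\xi$ with the composition $(Id+t\theta)\circ(Id+s\xi)$ and applying the first-order structure theorem to the shape gradient of the deformed domains; an argument of this kind (or some substitute for it) has to be supplied, and your proposal stops short of it.
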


We are now able to state the main result of this section. 
Roughly speaking it states that, if the second order derivative of the shape functional satisfies a coercivity-like property,   
 its minimizers among convex bodies cannot contain  smooth regions with positive Gauss curvature in their boundary.

Below we denote by $|\cdot|_{H^1(\omega)}$ the classical semi-norm of $H^1(\omega)$, and by $\|\cdot\|_{H^\frac{1}{2}(\omega)}$ the classical norm of $H^\frac{1}{2}(\omega)$.

\begin{teo}\label{th:local}
Let $K^*$ be a minimizer for a functional  $J:\mathcal{K}_{0}^n\to\R$. 
Assume that $\partial K^*$ contains
a relatively open set $\omega$ of class  $\C^3$ such that, 
for every compact set $U$ with $(U\cap\partial K)\subset\omega$, 
 $J$ is twice differentiable at $K^*$ in $U$  and the bilinear form $l_2 ^J(K^*)$ satisfies 
\begin{equation}\label{cont1}
\forall \varphi\in\mathcal{C}^\infty_{c}(\omega),\quad l_{2}^{J}(K^*)\cdot(\varphi,\varphi)\leq -C_{1}|\varphi|^2_{H^1(\omega)}+C_{2}\|\varphi\|^2_{H^{\frac{1}{2}}(\omega)}
\end{equation}
for some constants $C_1>0$, $C_2 \in \R$.
Then $G_{K^*}=0$ on $\omega.$
\end{teo}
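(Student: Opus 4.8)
The plan is to argue by contradiction, using that at a boundary point where the Gauss curvature is strictly positive the convexity constraint is locally inactive: there, second order optimality forces $l_2^J(K^*)$ to be nonnegative on compactly supported test functions, clashing with the coercivity bound (\ref{cont1}). So I would assume $G_{K^*}(x_0)>0$ for some $x_0\in\omega$. Since $\partial K^*$ is of class $\mathcal{C}^3$ on $\omega$, the principal curvatures are continuous there and, $K^*$ being convex, nonnegative; a strictly positive product at $x_0$ forces each of them to be positive, hence bounded below by some $\kappa_0>0$, on a relatively open neighbourhood $\omega_0$ of $x_0$ whose closure lies in $\omega$. Thus $\omega_0$ is a uniformly strictly convex cap of $\partial K^*$.

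The geometric core is an admissibility claim. Fix $\varphi\in\mathcal{C}^\infty_c(\omega_0)$ and a compact set $U$ containing $\mathrm{supp}\,\varphi$ with $U\cap\partial K^*\subset\omega_0$, and extend $\varphi\,\nu_{K^*}$ to a field $\theta\in\Theta_U$ that is normal on $\omega_0$ (legitimate because $\nu_{K^*}$ is $\mathcal{C}^2$ on $\omega$). I would then show that $K_{t\theta}:=(Id+t\theta)(K^*)$ stays convex for all sufficiently small $|t|$, in both signs. Writing $\partial K^*$ locally as the graph of a function with Hessian $\geq\kappa_0 I$, a normal displacement of size $t\varphi$ changes this Hessian by $O(t)$ in the sup norm, so for $|t|<\varepsilon(\varphi)$ the boundary remains locally strictly convex on $\omega_0$ and is untouched elsewhere; since a connected compact body whose $\mathcal{C}^2$ boundary is locally convex at every point is globally convex, $K_{t\theta}$ is a convex competitor.

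Granting this, for $|t|<\varepsilon(\varphi)$ both $K_{t\theta}$ and $K_{-t\theta}$ are admissible, so $g(t):=J(K_{t\theta})=\J_{K^*}(t\theta)$ attains an interior minimum at $t=0$ and $g''(0)=\J''_{K^*}(0)\cdot(\theta,\theta)\geq0$. By the structure result for second order shape derivatives recalled above, $\J''_{K^*}(0)\cdot(\theta,\theta)=l_2^J(K^*)\cdot(\varphi,\varphi)$, whence $l_2^J(K^*)\cdot(\varphi,\varphi)\geq0$ for every $\varphi\in\mathcal{C}^\infty_c(\omega_0)$. Combined with (\ref{cont1}) this gives $C_1|\varphi|_{H^1(\omega)}^2\leq C_2\|\varphi\|_{H^{1/2}(\omega)}^2$ for all such $\varphi$, which I would defeat by oscillation: in local coordinates take $\varphi_k=\chi\,\cos(k\,e\cdot x)$ with a fixed cut-off $\chi$ supported in $\omega_0$ and a fixed direction $e$, so that $|\varphi_k|_{H^1}^2\sim c\,k^2$ dominates $\|\varphi_k\|_{H^{1/2}}^2\sim c'\,k$ and the inequality fails for $k$ large. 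This contradiction shows $G_{K^*}(x_0)=0$; as $x_0\in\omega$ was arbitrary and $G_{K^*}\geq0$, we conclude $G_{K^*}\equiv0$ on $\omega$.

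I expect the admissibility step to be the main obstacle, since it is what converts the constrained second order condition into the unconstrained inequality $l_2^J(K^*)\geq0$, and it rests entirely on the uniform strict convexity of $\omega_0$. It is worth stressing that there is no tension between the high frequency needed to beat the lower order $H^{1/2}$ term and the smallness of $t$ required for convexity: $g''(0)$ is homogeneous of degree two in $t$, so its \emph{sign} is that of $l_2^J(K^*)\cdot(\varphi,\varphi)$ for the fixed profile $\varphi$, and one is free to let $t\to0$ for admissibility while keeping $\varphi$ arbitrarily oscillatory.
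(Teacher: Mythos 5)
Your proposal is correct and follows essentially the same route as the paper: contradiction via strict positivity of the curvature near a point, admissibility of two-sided normal perturbations supported there, second-order optimality giving $l_2^J(K^*)\cdot(\varphi,\varphi)\ge 0$, and then a clash with \eqref{cont1} because $H^{1/2}$ cannot control $H^1$. The only differences are that you flesh out two steps the paper leaves terse — the graph-based verification that $K_{t\theta}$ stays convex, and the explicit oscillatory family $\varphi_k$ in place of the paper's appeal to the non-embedding of $H^{1/2}_0(\omega)$ into $H^1_0(\omega)$ — both of which are faithful elaborations rather than a different argument.
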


\begin{remark}\rm
The presence of regions with vanishing Gauss curvature on the boundary of an optimal convex set was already observed in \cite{BrFeKa} for the Newton problem of minimal resistance (see also \cite{LaPe}). A fundamental difference is that, in our framework,  such a vanishing property of Gauss curvature is obtained by using the coercivity-like property \eqref{cont1} as a key argument, since 
the kind of shape functionals we deal with depend on PDE's ({\it cf.} Theorem \ref{th:PS} below).
\end{remark}

\begin{proof}
Assume by contradiction that $G_{K^*}$ is positive at some point of $\omega$. By continuity, without loosing generality one may assume that $G_{K^*}>0$ on $\omega$.

Let $\varphi\in \mathcal{C}^\infty_{c}(\omega)$, and let $V\in \Theta_{U}$ be an extension  of $\varphi\nu_{K}\in\C^2_{c}(\omega,\R^n)$ to $\R^n$, with compact support $U$ and such that $(U\cap\partial K)\subset\omega$.
All the principal curvatures of $K^*$ being positive in $\omega$, by a continuity argument the principal curvatures of $K_{t}:=(Id+tV)(K^*)$ are still positive for $t$ small enough. Therefore $K_{t}$ is still a convex set, and one can write
$$J(K^*)\leq J(K_{t})\quad\textrm{ for }t\textrm{ small}.$$
Therefore, the second optimality conditions gives $$0\leq l^J_{2}(K^*)\cdot(\varphi,\varphi)$$
and so, using the coercivity property of $l^J_{2}$, we get
$$\forall\varphi\in\mathcal{C}^\infty_{c}(\omega)\ , \quad |\varphi|^2_{H^1(\omega)}\leq \frac{C_{2}}{C_{1}}\|\varphi\|^2_{H^{\frac{1}{2}}(\omega)}. $$
Relying on the density of $\mathcal{C}^\infty_{c}(\omega)$ in $H^1_{0}(\omega)$, we get the continuous imbedding of $H^\frac{1}{2}_{0}(\omega)$ in $H^1_{0}(\omega)$, which is a contradiction.
\end{proof}

\subsection{Extremal problems under surface constraint}

\begin{teo}\label{th:PS}
Let $K^*$ be a minimizer over ${K}^n_{0}$ for a shape functional of the kind
\begin{equation}\label{eq:min_cap}
\mathcal{E}(K):=\frac{F(K)}{S(K)}\ , 
\end{equation}
where $F(K)=f({\rm Vol}(K),\lambda_{1}(K),\tau(K),\Cap(K))$ for some $\C^2$ function $f:\R^4 \to (0,+\infty)$. 

If $\partial K^*$  contains a relatively open set $\omega$ of class $\C^3$, then $G_{K^*}= 0$ on $\omega$.
\end{teo}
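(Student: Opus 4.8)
The plan is to deduce the statement directly from Theorem \ref{th:local} applied to $J=\E$. Thus the whole matter reduces to verifying that, on any relatively open portion $\omega\subset\partial K^*$ of class $\C^3$, the second order shape derivative of $\E$ satisfies the one-sided coercivity estimate \eqref{cont1}, i.e.
$$l_2^{\E}(K^*)\cdot(\varphi,\varphi)\leq -C_1|\varphi|^2_{H^1(\omega)}+C_2\|\varphi\|^2_{H^{\frac12}(\omega)}\qquad\forall\,\varphi\in\C^\infty_c(\omega),$$
for some $C_1>0$ and $C_2\in\R$, and that $\E$ is genuinely twice differentiable at $K^*$ in the sense of Definition \ref{def:der} on the regions under consideration. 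Once this is checked, Theorem \ref{th:local} gives $G_{K^*}=0$ on $\omega$, because a positive Gauss curvature would force the continuous imbedding of $H^{\frac12}_0(\omega)$ into $H^1_0(\omega)$, a contradiction.

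First I would write $\E=F/S$ and differentiate twice along a normal deformation $\varphi\nu_{K^*}$. Since the first variations of both $F$ and $S$ are zeroth-order boundary integrals of $\varphi$ (of the form $\int_{\partial K^*}g\,\varphi\,d\H^{n-1}$), every contribution to the second derivative of the quotient built from products of first variations is controlled by $\|\varphi\|^2_{L^2(\omega)}$; hence, up to such lower-order terms,
$$l_2^{\E}(K^*)\cdot(\varphi,\varphi)=\frac{1}{S(K^*)}\,l_2^{F}(K^*)\cdot(\varphi,\varphi)-\frac{F(K^*)}{S(K^*)^2}\,l_2^{S}(K^*)\cdot(\varphi,\varphi)+O\!\left(\|\varphi\|^2_{L^2(\omega)}\right).$$
The decisive term is the second one. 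The classical second variation of the surface area of a smooth convex hypersurface has the form $l_2^{S}(K^*)\cdot(\varphi,\varphi)=|\varphi|^2_{H^1(\omega)}+\int_\omega q\,\varphi^2\,d\H^{n-1}$, with a bounded coefficient $q$ depending on the curvatures of $\omega$, its leading part being the intrinsic Dirichlet energy $\int_\omega|\nabla_\tau\varphi|^2\,d\H^{n-1}$. Because $F(K^*)>0$ (as $f$ takes values in $(0,+\infty)$) and $S(K^*)>0$, the prefactor $-F(K^*)/S(K^*)^2$ is strictly negative, so this term supplies exactly the required coercive contribution $-C_1|\varphi|^2_{H^1(\omega)}$ with $C_1=F(K^*)/S(K^*)^2>0$, modulo a remainder bounded by $\|\varphi\|^2_{L^2(\omega)}\le C\|\varphi\|^2_{H^{\frac12}(\omega)}$.

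It then remains to bound $l_2^{F}(K^*)$ from above by $C\|\varphi\|^2_{H^{\frac12}(\omega)}$. By the chain rule for $F=f({\rm Vol},\lambda_1,\tau,\Cap)$, this quantity is a combination, with bounded coefficients $\partial_i f$ and $\partial_{ij}f$ evaluated at $K^*$, of the first and second shape derivatives of the four ingredient functionals. The products of first derivatives are controlled by $\|\varphi\|^2_{L^2(\omega)}$, and the second variation of the volume is of order zero, ${\rm Vol}''(\varphi)=\int_\omega H_{K^*}\varphi^2\,d\H^{n-1}$, hence again $L^2$-controlled. The crucial estimates concern the PDE functionals $\lambda_1$, $\tau$ and $\Cap$: for each of them the second shape derivative is expressed through the shape derivative $u'$ of the underlying state, which solves an elliptic boundary value problem whose Dirichlet datum on $\omega$ is a multiple of $\varphi$; the resulting quadratic form is a Dirichlet-type energy continuous on $H^{\frac12}(\omega)$, so that $|l_2^{G_i}(K^*)\cdot(\varphi,\varphi)|\le C\|\varphi\|^2_{H^{\frac12}(\omega)}$. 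Collecting all these bounds yields \eqref{cont1}.

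The main obstacle is precisely this last point: rigorously establishing the $H^{\frac12}$-continuity of the second shape derivatives of the Dirichlet-type functionals, and verifying the twice shape differentiability of $\E$ on $\omega$. This requires a careful study of the boundary value problems solved by the material and shape derivatives of the states $u_{K^*}$, together with elliptic regularity on the $\C^3$ portion $\omega$, so that the order-one nature of the associated Dirichlet-to-Neumann maps is matched exactly by the $H^{\frac12}$ norm and is strictly dominated by the genuinely coercive $H^1$ term coming from the perimeter in the denominator. The remaining verifications — the localization permitting compactly supported test functions $\varphi$ and the admissibility of the convexity-preserving deformations — are routine and already encoded in Theorem \ref{th:local}.
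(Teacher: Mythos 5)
Your proposal is correct and follows essentially the same route as the paper: the whole argument rests on the coercive lower bound $l_2^{S}\geq c_1|\varphi|^2_{H^1(\omega)}+c_2'\|\varphi\|^2_{L^2(\omega)}$ for the surface term and the upper bound $|l_2^{G_i}|\leq c_2\|\varphi\|^2_{H^{1/2}(\omega)}$ for ${\rm Vol},\lambda_1,\tau,\Cap$ (the content of Lemma \ref{lem:derivative}), fed into Theorem \ref{th:local}. The only (harmless) difference is that you differentiate the quotient $F/S$ directly and absorb the products of first variations into the $H^{1/2}$ remainder, whereas the paper first replaces $\E$ by the Lagrangian $J=F-\beta S$ with $\beta=\E(K^*)$, of which $K^*$ is still a minimizer, which makes the algebra slightly cleaner.
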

\begin{remark}\label{ex:PS}\rm
When $n=3$ and $F(K)=\Cap(K)^2$, the P\'olya-Sz\"ego conjecture claims that the minimizer for  \eqref{eq:min_cap} is the planar disk. As a consequence of Theorem \ref{th:PS}, we obtain that every set $K\in \mathcal{K}^3_{0}$ whose boundary contains a smooth region with  positive Gauss curvature cannot be a minimizer.
\end{remark}
\begin{proof}[of Theorem \ref{th:PS}]
If $K^*$ minimizes $\mathcal{E}$ in $\mathcal{K}_{0}^n$, then  it also minimizes the functional  $K\mapsto J(K):=F(K)- \beta S(K)$ for $\beta:=\frac{F(K^*)}{S(K^*)}>0$. The conclusion of Theorem \ref{th:PS} is then a direct consequence of Theorem \ref{th:local} and of the following lemma: indeed, we get
\begin{eqnarray*}
\forall \varphi\in\mathcal{C}^\infty_{c}(\omega),\quad l_{2}^{J}(K^*)\cdot(\varphi,\varphi)&\leq& -\beta c_{1}|\varphi|^2_{H^1(\omega)}+c_{2}\|\varphi\|^2_{H^{\frac{1}{2}}(\omega)}-\beta c_{2}'\|\varphi\|^2_{L^2(\omega)}\\
&\leq& -\beta c_{1}|\varphi|^2_{H^1(\omega)}+(c_{2}+|\beta c'_{2}|)\|\varphi\|^2_{H^{\frac{1}{2}}(\omega)}
\end{eqnarray*}
with $\beta c_{1}>0$, and $c_{2}$ is obtained differentiating twice $f({\rm Vol}(K),\lambda_{1}(K),\tau(K),\Cap(K))$.
\end{proof}

\begin{lemma}\label{lem:derivative}
Let $F(K)$ be any of the shape functionals ${\rm Vol}(K), \tau(K), \lambda_{1}(K), \Cap(K)$. 
Let $K$  be a bounded open set, and let $\omega\subset \partial K$ be a relatively open region of class $\C^3$.
For any compact set $U$ such that $(U\cap \partial K)\subset\omega$, $F$ and $S$ are twice differentiable in $U$, and
$$\forall \varphi\in \C^2_{c}(\omega), \quad\left|l^{F}_{2}(K)\cdot(\varphi,\varphi)\right|\leq c_{2}\|\varphi\|^2_{H^{\frac{1}{2}}(\omega)}\quad\textrm{ and }\quad l^{S}_{2}(K)\cdot(\varphi,\varphi)\geq c_{1}|\varphi|^2_{H^1(\omega)}+c_{2}'\|\varphi\|^2_{L^2(\omega)}$$
for some constants $c_1>0$, $c_2,c_{2}'\in\R$.
\end{lemma}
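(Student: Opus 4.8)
The plan is to combine the Hadamard structure recalled in the Proposition above (and in \cite{HePi}) with the explicit second-order shape-derivative formulas for each functional, and then merely to read off the \emph{order} of the resulting quadratic form on $\omega$. The guiding principle is a mismatch of orders: the shape Hessian of the surface area is, to leading order, the Laplace--Beltrami (Dirichlet) form on $\omega$, hence second order and $H^1$-coercive; whereas the shape Hessians of the PDE functionals are, to leading order, Dirichlet-to-Neumann forms, hence only first order and therefore bounded on $H^{1/2}(\omega)$. As for twice-differentiability, I would first observe that every $\theta\in\Theta_{U}$ is supported in $U$ with $U\cap\partial K\subset\omega$, so the admissible deformations move only the relatively open $\C^3$-portion $\omega$ while fixing the rest of $\partial K$; on such deformations the state functions $u_K$ are, by interior smoothness together with elliptic regularity up to the $\C^3$ part, regular enough for the classical differentiability results for energy functionals to apply, yielding the twice shape-differentiability of ${\rm Vol},\tau,\lambda_1,\Cap$ and of $S$ in $U$ together with the integral representation of $l_2$.

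For the surface area I would use the known second variation formula for the area of $\partial K$ under a normal deformation $\varphi$, namely $l_2^{S}(K)\cdot(\varphi,\varphi)=\int_\omega|\nabla_\tau\varphi|^2\,d\H^{n-1}+\int_\omega \mathcal{R}\,\varphi^2\,d\H^{n-1}$, where $\nabla_\tau$ denotes the tangential gradient on $\omega$ and $\mathcal{R}$ is a bounded expression in the principal curvatures. The first term equals $|\varphi|^2_{H^1(\omega)}$; since the curvatures are bounded on the compact support of $\varphi$ in $\omega$, the second term is $\geq -C\|\varphi\|^2_{L^2(\omega)}$. This gives exactly the claimed lower bound, with $c_1=1$ and $c_2'=-C$.

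For the PDE functionals, volume is immediate, since $l_2^{{\rm Vol}}(K)\cdot(\varphi,\varphi)=\int_\omega H\,\varphi^2\,d\H^{n-1}$ ($H$ the mean curvature of $\partial K$) is bounded by $\|\varphi\|^2_{L^2}\leq\|\varphi\|^2_{H^{1/2}}$. For $\tau,\lambda_1,\Cap$ I would write the second derivative, up to a multiplicative constant, in the form $l_2^{F}(K)\cdot(\varphi,\varphi)=\int_\omega \partial_\nu u_K\,\partial_\nu u'\,\varphi\,d\H^{n-1}+\int_\omega \mathcal{Q}\,\varphi^2\,d\H^{n-1}$, where $u'$ is the shape derivative of the state, solving the associated linear problem (interior for $\tau,\lambda_1$, exterior for $\Cap$) with boundary datum $u'=-\varphi\,\partial_\nu u_K$ on $\omega$, and $\mathcal{Q}$ collects bounded curvature and second-normal-derivative terms. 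Setting $w:=\varphi\,\partial_\nu u_K$, the normal trace $\partial_\nu u'$ equals $\pm\Lambda w$ with $\Lambda$ the Dirichlet-to-Neumann operator of the relevant problem, so the leading term is $\pm\langle\Lambda w,w\rangle$. Since $\Lambda$ is continuous from $H^{1/2}$ to $H^{-1/2}$, this is controlled by $\|w\|^2_{H^{1/2}(\omega)}$; and because $\omega$ is of class $\C^3$, elliptic regularity gives $\partial_\nu u_K\in \C^{0,1}(\omega)$, so multiplication by $\partial_\nu u_K$ is bounded on $H^{1/2}$ for functions with compact support in $\omega$, whence $\|w\|_{H^{1/2}}\leq C\|\varphi\|_{H^{1/2}}$. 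Combined with $|\int_\omega\mathcal{Q}\varphi^2|\leq C\|\varphi\|^2_{L^2}\leq C\|\varphi\|^2_{H^{1/2}}$, this yields $|l_2^{F}(K)\cdot(\varphi,\varphi)|\leq c_2\|\varphi\|^2_{H^{1/2}}$. For $\lambda_1$ one additionally uses the normalization $\int u_K^2=1$ and the orthogonality of $u'$ to $u_K$, but the extra eigenvalue contribution only perturbs $\Lambda$ by a lower-order (compact) operator, so the $H^{1/2}$ bound persists.

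The conceptual heart of the statement, the order count between the two families of Hessians, is clean; the technical crux that I expect to require the most care is the rigorous derivation of the second-order formulas with the correct lower-order terms, and in particular the justification of the boundary identity $u'=-\varphi\,\partial_\nu u_K$ on the moving part together with the trace and elliptic estimates needed to realize the leading term as a Dirichlet-to-Neumann quadratic form. This is precisely where the $\C^3$ regularity of $\omega$ enters decisively, by guaranteeing that $\partial_\nu u_K$ is regular enough to preserve $H^{1/2}$ under multiplication.
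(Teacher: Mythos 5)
Your proposal is correct and follows essentially the same route as the paper: the second variation of $S$ is the tangential Dirichlet form plus a curvature term bounded in $L^2$, while for the PDE functionals the leading term of the Hessian is the Dirichlet energy of the harmonic extension of $\varphi\,\partial_\nu u_K$ (your Dirichlet-to-Neumann form $\langle\Lambda w,w\rangle$ is exactly the paper's $\int_K|\nabla R(-\varphi\,\partial_\nu u)|^2$), controlled by $\|\varphi\|^2_{H^{1/2}(\omega)}$ via Lipschitz regularity of $\partial_\nu u_K$ up to the $\C^3$ portion $\omega$. The only difference is one of emphasis: the paper delegates ${\rm Vol},\tau,\lambda_1$ to the formulas in the Henrot--Pierre book and writes out in full only the capacity case (differentiability via a cutoff, transport to a fixed domain, and the implicit function theorem), whereas you sketch the same order-counting argument uniformly for all three.
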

\begin{proof}
Except for the Newtonian capacity, the computations of the second order shape derivatives can be found in \cite{HePi}. In our framework,  the only difference is that we assume only the smoothness of  $\omega$, where the local perturbations are performed. This does not affect the results of \cite{HePi}.

For the surface area, one has
$$l^{S}_{2}(K)\cdot(\varphi,\varphi)=\int_{\partial K}|\nabla_{\tau}\varphi|^2-2G_{K}\varphi^2,$$
where $G_{K}$ is the Gauss curvature, well defined on the support of $\varphi\subset\omega$. This easily gives:
$$l^{S}_{2}(K)\cdot(\varphi,\varphi)\geq |\varphi|^2_{H^1(\omega)}+c_{2}'\|\varphi\|^2_{L^2(\omega)}$$
where $c_{2}'=-2\max_{\omega} G_{K}$.\\
For ${\rm Vol},\lambda_{1}$, and $\tau$, formulas given in \cite{HePi} directly give the $H^{\frac{1}{2}}$-continuity property.

Let us provide a complete proof of this result  for the Newtonian capacity, by following the same lines as in \cite{FrGaPi}.

For $\theta\in \Theta_{U,\varepsilon}$ with $\varepsilon$ small enough, we denote $K_{\theta}=(Id+\theta)(K)$, $u_{\theta}=u_{K_{\theta}}$, and $v_{\theta}=u_{\theta}\circ (Id +\theta)$.
We also denote by $\mathcal{D}$ (respectively $\mathcal{D}_{0}$) the closure of the space of smooth functions compactly supported in $\R^3\setminus K$
 (respectively in $\R^3 \setminus\overline{K}$) with respect to the Dirichlet norm $u\mapsto\int_{\R^3\setminus K}|\nabla u|^2$.\\
 If we introduce $\psi\in \mathcal{C}^\infty_{c}(\R^3)$ with
$$0\leq \psi\leq1,\quad \psi\equiv 1 \textrm{ on an open neighborhood of }\overline{K},$$
since $\Delta(\psi-u_{\theta})=\Delta\psi$ on $K_{\theta}$, $w_{\theta}:=\psi\circ(Id+\theta)-v_{\theta}$ is a variational solution of

\begin{equation}\label{eq:diff}
w_{\theta}\in\mathcal{D}_{0}, \quad -\nabla\cdot \left(A(\theta)\nabla w_{\theta}\right)=[f\circ(Id+\theta)]J_{\theta},
\end{equation}
where
$$f=\Delta\psi,\quad A(\theta)=J_{\theta}(Id+D\theta)^{-1}(Id+^t\!\!D\theta)^{-1}, \quad J_{\theta}=det(Id +D\theta).$$

Now, we consider the mapping
$$F:(\theta,w)\in \Theta_{U}\times \left(\mathcal{D}_{0}\cap H^2((\R^3\setminus K)\cap U)\right)\mapsto -\nabla\cdot (A(\theta)\nabla w)-[f\circ(Id+\theta)]J_{\theta}\in\mathcal{D}_{0}'\cap L^2((\R^3\setminus K)\cap U).$$

We check as in \cite{HePi} that $F$ is $\C^\infty$ and that
$$D_{w}F(0,w_{0})=-\Delta:\mathcal{D}_{0}\cap H^2((\R^3\setminus K)\cap U)\to\mathcal{D}_{0}'\cap L^2((\R^3\setminus K)\cap U)$$
is an isomorphism (by Lax-Milgram's Theorem and classical regularity results). By the implicit function Theorem, and unicity in \eqref{eq:diff}, it follows that $\theta\in\Theta_{U}\mapsto w_{\theta}\in\mathcal{D}_{0}\cap H^2((\R^3\setminus K)\cap U)$ is $\C^\infty$ around $\theta=0$, and studying the differentiability of $\theta\mapsto (Id+\theta)^{-1}$ and the composition operator (see \cite[Prop 5.3.10, Exercise 5.2]{HePi} and \cite{La}), one deduce that
$\theta\in\Theta_{U}\mapsto u_{\theta}=\psi-w_{\theta}\circ(Id+\theta)^{-1}\in L^2_{loc}(\R^3)$ is also twice differentiable around 0.\\

Let us now compute the second order derivative of the Newtonian capacity: let $\varphi\in\C^\infty(\omega)$, and let $V\in \Theta_{U}$ be an extension to $\R^n$ of $\varphi\nu_{K}$, with compact support $U$ such that $U\cap\partial K\subset \omega$.
For $t$ small,
 we denote $K_{t}=(Id+tV)(K)$, $u_{t}=u_{K_{t}}$, and $v_{t}=u_{t}\circ (Id +tV)$;
we get that for small enough $t$ such that $\partial K_{t}\subset {\rm Supp}(\psi)$,
$$\Cap(K_{t})=\int_{\R^3\setminus K_{t}}|\nabla u_{t}|^2=-\int_{\partial K_{t}}\frac{\partial u_{t}}{\partial \nu_{t} }=-\int_{\partial K_{t}}\psi\frac{\partial u_{t}}{\partial \nu_{t} }=-\int_{\R^3\setminus K_{t}}u_{t}\Delta\psi=-\int_{\R^3}u_{t}\Delta\psi$$
and so $\frac{d^2}{dt^2}\Cap(K_{t})_{|t=0}=-\int_{\R^3}u''\Delta\psi=\int_{\R^3\setminus{K}}u''\Delta(u-\psi)=\int_{\partial K}u''\frac{\partial u}{\partial \nu}$ with
$$\begin{array}{ll}
\Delta u'=0\; in \;\R^3\setminus \overline{K},&u'+\nabla u\cdot V=0\; on\; \partial K,\\
\Delta u''=0\; in \;\R^3\setminus \overline{K},&u''+2\nabla u'\cdot V+(D^2u\cdot V)\cdot V\;on \;\partial K,
\end{array}
$$
where the boundary conditions are obtained when we differentiate $u_{t}(x+tV(x))=0, \forall x\in\partial K$ (since $V_{|\partial K}$ is supported in $\omega$, these expressions are well defined, since $u$ is regular enough up to $\omega$).\\

Let us denote by
$R:f\in H^{\frac{1}{2}}(\partial K)\longmapsto z=R(f)\in \mathcal{D}$ such that
$\left\{\begin{array}{rl}\Delta z=0&\textrm{ in }\R^3\setminus\overline{K}\\ z=f&\textrm{ on }\partial K\end{array}\right.$ which is a continuous operator.\\
Then
\begin{eqnarray*}
\left|\frac{d^2}{dt^2}\Cap(K_{t})_{|t=0}\right|&=&\left|-\int_{\partial K}\left[2\varphi\frac{\partial u}{\partial \nu}\frac{\partial R\left(-\varphi\frac{\partial u}{\partial \nu}\right)}{d\nu}+\varphi^2\frac{\partial u}{\partial \nu}(D^2u\cdot \nu)\cdot \nu\right]\right|\\
&=&\left|-\int_{K}2\left|\nabla R\left(-\varphi\frac{\partial u}{\partial \nu}\right)\right|^2+\int_{\partial K}\varphi^2(D^2u\cdot \nu)\cdot \nu\right|\\
&\leq&C\left(\left|R(\varphi\frac{\partial u}{d\nu})\right|_{H^1(K)}^2+\|\varphi\|_{L^2(\omega)}^2\right)\\
&\leq&c_{2}\|\varphi\|_{H^{\frac{1}{2}}(\omega)}^2.\\
\end{eqnarray*}

\end{proof}

\bigskip\bigskip

\par\vskip5mm

\par\vskip5mm

Dorin BUCUR \par
Laboratoire de Math\'ematiques UMR 5127 \par
Universit\'e de Savoie,  Campus Scientifique \par
73376 Le-Bourget-Du-Lac (France)
\par \medskip

Ilaria FRAGAL\`A \par
Dipartimento di Matematica, Politecnico \par
Piazza Leonardo da Vinci, 32 \par
20133 Milano (Italy)
\par \medskip

Jimmy LAMBOLEY \par
Ceremade UMR 7534 \par
Universit\'e de Paris-Dauphine \par
Place du Mar\'echal De Lattre De Tassigny \par
75775 Paris Cedex 16 (France)

\end{document}